  \providecommand\BibTeX{{%
    \normalfont B\kern-0.5em{\scshape i\kern-0.25em b}\kern-0.8em\TeX}}}
\def\eatspace#1{#1}
\def\step#1#2{\par\kern1pt\dimen44=#2em\advance\dimen44 1.67em\hangindent\dimen44\hangafter=1\noindent\rlap{\small#1}\kern\dimen44\relax\eatspace}
\def\<#1>{\langle#1\rangle}
\theoremstyle{definition}
\def\testb#1{\testb@i#1,,\@nil}%
\def\testb@i#1,#2,#3\@nil{%
  \draw[->] (O) --++(#1);
  \ifx\relax#2\relax\else\testb@i#2,#3\@nil\fi}
\newtheorem{Proposition}{Proposition}
\newtheorem{Lemma}{Lemma}
\newtheorem{Definition}{Definition}
\newcommand{\uproman}[1]{\uppercase\expandafter{\romannumeral#1}}
\pgfplotsset{compat=1.13}
\def\eatspace#1{#1}
\def\step#1#2{\par\kern1pt\dimen44=#2em\advance\dimen44 1.67em\hangindent\dimen44\hangafter=1\noindent\rlap{\small#1}\kern\dimen44\relax\eatspace}
\def\<#1>{\langle#1\rangle}
\def\val{\operatorname{val}}
  \providecommand\BibTeX{{%
    Bib\TeX}}}
\begin{document}

\title{
$x(1-t(x + x^{-1})) F(x;t) = x - t F(0;t)$


}

\date{\vspace{-1.75 cm}}



\maketitle

\begin{center}
\begin{tabular}{@{}c@{}}
    Manfred Buchacher \\
    Johannes Kepler Universit{\"a}t Linz\\
    \normalsize manfredi.buchacher@gmail.com
  \end{tabular}%
  \end{center}
\bigskip

%

The enumeration of lattice walks restricted to cones has received considerable attention since Bousquet-M\'{e}lou initiated their systematic study in the early 2000's. 
The question how to count them and how to find a ``nice'' formula for their number 
can be approached by considering their generating functions and investigating the functional equations they satisfy. Numerous people have done so by now and there are meanwhile many methods these equations can be studied with.  
The methods are diverse. They are algebraic and / or analytic. They may rely on elementary power series algebra~\cite{smallSteps, bousquet2016elementary, Tutte, Mishna1} and constructive and computer algebra~\cite{BostanKauers, bostan2017hypergeometric}, or involve complex analysis and boundary value problems~\cite{fayolle1999random, raschel2012counting, Tutte}, Galois theory of difference equations~\cite{dreyfus2018nature, dreyfus2020walks, hardouin2021differentially}, analytic combinatorics~\cite{flajolet2009analytic, PemantleWilsonMelczer2024}, or probability theory~\cite{fayolle1999random, denisov2015random}.
\bigskip

The purpose of these notes is to introduce some of the problems the enumeration of lattice walks is dedicated to and familiarize 
with some of the arguments they can be addressed with. We discuss the enumeration of lattice walks, their generating functions, and the functional equations they satisfy. We focus on algebraic methods for manipulating and solving these equations. Elementary power series algebra plays a prominent role, computer algebra too, but we repeatedly digress and present ideas and methods of different kind whenever it is appropriate. The exposition is organized around the most simple yet non-trivial problem: the enumeration of simple walks on the half-line.
The intention is to illustrate different techniques without getting technical. 
\bigskip

The purpose of the notes is not to give an exhaustive overview of the literature on the enumeration of lattice walks. The subject is much too vast. We limit ourselves to the study of functional equations and the 
algebraic and related methods that could have originated from that. The pointers to the literature we give are therefore limited to articles that are relevant in this context. For a broader and more detailed exposition of the subject we refer to the surveys by Krattenthaler~\cite{Krattenthaler} and Humphreys~\cite{humphreys2010history}.     
\bigskip


The notes are organized in 14 sections. Section~\ref{sec:genF} provides the basic definitions and introduces lattice walks, their generating functions, and the functional equations they satisfy. In Sections~\ref{sec:kernelMethod},~\ref{sec:wienerHopf},~\ref{sec:orbitSum},~\ref{sec:compInv} and ~\ref{sec:invMethod} we solve such equations using methods that rely on elementary power series algebra only. We discuss the kernel method, Wiener-Hopf factorization, the orbit-sum method, compositional inverses and Lagrange inversion as well as the invariant method. These methods give rise to expressions for the generating functions that beg for combinatorial explanations. Such explanations are given in Sections~\ref{sec:contFrac},~\ref{sec:reflecPrinciple},~\ref{sec:combFact} and~\ref{sec:cycleLemma} and relate them to continued fractions, the reflection principle, a combinatorial factorization of lattice walks and the cycle lemma. In Section~\ref{sec:guessProve} and Section~\ref{sec:diffAlg} we illustrate the paradigm of guess and prove and discuss the usefulness of the notion of D-finiteness. They connect enumerative combinatorics with computer and differential algebra. Section~\ref{sec:asympMethod} discusses asymptotics, generalized series solutions of linear recurrences, and how they relate to each other. A discussion of the references much of this text relies on is postponed to Section~\ref{sec:references}. 
Some methods for the enumeration of lattice walks are not addressed at all though they do fall into the framework of this text. We only mention the half-orbit sum method, the obstinate kernel method and the iterated kernel method here. Pointers to the relevant literature are given in Section~\ref{sec:references}.    


\setcounter{section}{-1}

\section{Notation} 
We denote by $\mathbb{N}$, $\mathbb{Z}$, $\mathbb{Q}$ and $\mathbb{C}$ the non-negative integers, the integers and the rational and complex numbers, respectively. We denote by $x$ and $t$ variables. We write $\mathbb{Q}[x]$ for the ring of polynomials in $x$ over $\mathbb{Q}$, and we denote by $\mathbb{Q}(x)$ its quotient field. Defining $\bar{x}:=x^{-1}$, for notational convenience, we denote by $\mathbb{Q}[x,\bar{x}]$ the ring of Laurent polynomials in $x$. Given a ring $A$ such as $\mathbb{Q}$, $\mathbb{Q}[x]$, $\mathbb{Q}[\bar{x}]$ or $\mathbb{Q}[x, \bar{x}]$, we denote by $A[[t]]$ and $A((t))$ the rings of power and Laurent series in $t$ over $A$.      
Given a series $F(x;t)$ in $\mathbb{Q}[x,\bar{x}][[t]]$ and an integer $n$, we denote by $[t^n] F(x;t)$ the coefficient of $t^n$, and we define by
\begin{equation*}
[x^{\geq }] F(x;t) := \sum_{n\geq 0} x^n [x^n]F(x;t)
\end{equation*}
its \emph{non-negative part}.

\section{Generating functions and functional equations}\label{sec:genF}

\begin{Definition}
A \emph{lattice walk} is a sequence $P_0, \dots, P_n$ of points of $\mathbb{Z}^d$, $d\in\mathbb{N}$. The points $P_0$ and $P_n$ are its starting and end point, respectively, the consecutive differences $P_{i+1}-P_i$ are its steps, and $n$ is its length. Fixing $R$, $S\subseteq \mathbb{Z}^d$, we denote by $f(Q;n)$ the number of walks in $R$ that start at the origin, consist of $n$ steps all of which are taken from $S$, and end in $Q\in\mathbb{Z}^d$. The series 
\begin{equation*}
F(x;t) := \sum_{n\geq 0} \left( \sum_{Q\in R} f(Q;n) x^Q \right) t^n 
\end{equation*}
is called the \emph{generating function} of the sequence $(f(Q;n))$.
\end{Definition}

\begin{figure}[ht]
\hspace{-1.5cm}
  \centering
  \begin{center}
  \begin{tikzpicture}[scale=.4]
   
    \coordinate (Origin)   at (-2,-1);
    \coordinate (XAxisMin) at (-3,-1);
    \coordinate (XAxisMax) at (21.5,-1);
    \coordinate (YAxisMin) at (-2,-2);
    \coordinate (YAxisMax) at (-2,3.5);
    \draw [thin, gray,-latex] (XAxisMin) -- (XAxisMax);
    \draw [thin, gray,-latex] (YAxisMin) -- (YAxisMax);
 
    \coordinate (s1) at (1,1);
    \coordinate (s2) at (1,-1);

    \draw[style=help lines,dashed] (-2,-1) grid[step=1cm] (20,2);


%
%
%
%
%
%
        
           \draw [thick,-latex,teal] ($(Origin)$)
        -- ($(Origin)+(s1)$) node [above right] {};
        
               \draw [thick,-latex,teal] ($(Origin)+(s1)$)
        -- ($(Origin)+(s1)+(s1)$) node [above right] {};

               \draw [thick,-latex,teal] ($(Origin)+(s1)+(s1)$)
        -- ($(Origin)+(s1)+(s1)+(s2)$) node [above right] {};
        
                \draw [thick,-latex,teal] ($(Origin)+(s1)+(s1)+(s2)$)
        -- ($(Origin)+(s1)+(s1)+(s2)+(s1)$) node [above right] {};
        
                 \draw [thick,-latex,teal] ($(Origin)+(s1)+(s1)+(s2)+(s1)$)
        -- ($(Origin)+(s1)+(s1)+(s2)+(s1)+(s2)$) node [above right] {};
        
        \draw [thick,-latex,teal] ($(Origin)+(s1)+(s1)+(s2)+(s1)+(s2)$)
        -- ($(Origin)+(s1)+(s1)+(s2)+(s1)+(s2)+(s1)$) node [above right] {};
        
           \draw [thick,-latex,teal] ($(Origin)+(s1)+(s1)+(s2)+(s1)+(s2)+(s1)$)
        -- ($(Origin)+(s1)+(s1)+(s2)+(s1)+(s2)+(s1)+(s1)$) node [above right] {};
        
            \draw [thick,-latex,teal] ($(Origin)+(s1)+(s1)+(s2)+(s1)+(s2)+(s1)+(s1)$)
        -- ($(Origin)+(s1)+(s1)+(s2)+(s1)+(s2)+(s1)+(s1)+(s2)$) node [above right] {};
   
     \draw [thick,-latex,teal] ($(Origin)+(s1)+(s1)+(s2)+(s1)+(s2)+(s1)+(s1)+(s2)$)
        -- ($(Origin)+(s1)+(s1)+(s2)+(s1)+(s2)+(s1)+(s1)+(s2)+(s2)$) node [above right] {};
        
         \draw [thick,-latex,teal] ($(Origin)+(s1)+(s1)+(s2)+(s1)+(s2)+(s1)+(s1)+(s2)+(s2)$)
        -- ($(Origin)+(s1)+(s1)+(s2)+(s1)+(s2)+(s1)+(s1)+(s2)+(s2)+(s2)$) node [above right] {};
        
             \draw [thick,-latex,teal] ($(Origin)+(s1)+(s1)+(s2)+(s1)+(s2)+(s1)+(s1)+(s2)+(s2)+(s2)$)
        -- ($(Origin)+(s1)+(s1)+(s2)+(s1)+(s2)+(s1)+(s1)+(s2)+(s2)+(s2)+(s1)$) node [above right] {};
        
         \draw [thick,-latex,teal] ($(Origin)+(s1)+(s1)+(s2)+(s1)+(s2)+(s1)+(s1)+(s2)+(s2)+(s2)+(s1)$)
        -- ($(Origin)+(s1)+(s1)+(s2)+(s1)+(s2)+(s1)+(s1)+(s2)+(s2)+(s2)+(s1)+(s1)$) node [above right] {};
        
         \draw [thick,-latex,teal] ($(Origin)+(s1)+(s1)+(s2)+(s1)+(s2)+(s1)+(s1)+(s2)+(s2)+(s2)+(s1)+(s1)$)
        -- ($(Origin)+(s1)+(s1)+(s2)+(s1)+(s2)+(s1)+(s1)+(s2)+(s2)+(s2)+(s1)+(s1)+(s2)$) node [above right] {};

 \draw [thick,-latex,teal] ($(Origin)+(s1)+(s1)+(s2)+(s1)+(s2)+(s1)+(s1)+(s2)+(s2)+(s2)+(s1)+(s1)+(s2)$)
        -- ($(Origin)+(s1)+(s1)+(s2)+(s1)+(s2)+(s1)+(s1)+(s2)+(s2)+(s2)+(s1)+(s1)+(s2)+(s1)$) node [above right] {};
        
         \draw [thick,-latex,teal] ($(Origin)+(s1)+(s1)+(s2)+(s1)+(s2)+(s1)+(s1)+(s2)+(s2)+(s2)+(s1)+(s1)+(s2)+(s1)$)
        -- ($(Origin)+(s1)+(s1)+(s2)+(s1)+(s2)+(s1)+(s1)+(s2)+(s2)+(s2)+(s1)+(s1)+(s2)+(s1)+(s2)$) node [above right] {};

    \draw [thick,-latex,teal] ($(Origin)+(s1)+(s1)+(s2)+(s1)+(s2)+(s1)+(s1)+(s2)+(s2)+(s2)+(s1)+(s1)+(s2)+(s1)+(s2)$)
        -- ($(Origin)+(s1)+(s1)+(s2)+(s1)+(s2)+(s1)+(s1)+(s2)+(s2)+(s2)+(s1)+(s1)+(s2)+(s1)+(s2)+(s1)$) node [above right] {};
        
          \draw [thick,-latex,teal] ($(Origin)+(s1)+(s1)+(s2)+(s1)+(s2)+(s1)+(s1)+(s2)+(s2)+(s2)+(s1)+(s1)+(s2)+(s1)+(s2)+(s1)$)
        -- ($(Origin)+(s1)+(s1)+(s2)+(s1)+(s2)+(s1)+(s1)+(s2)+(s2)+(s2)+(s1)+(s1)+(s2)+(s1)+(s2)+(s1)+(s1)$) node [above right] {};
        
            \draw [thick,-latex,teal] ($(Origin)+(s1)+(s1)+(s2)+(s1)+(s2)+(s1)+(s1)+(s2)+(s2)+(s2)+(s1)+(s1)+(s2)+(s1)+(s2)+(s1)+(s1)$)
        -- ($(Origin)+(s1)+(s1)+(s2)+(s1)+(s2)+(s1)+(s1)+(s2)+(s2)+(s2)+(s1)+(s1)+(s2)+(s1)+(s2)+(s1)+(s1)+(s2)$) node [above right] {};

    \draw [thick,-latex,teal] ($(Origin)+(s1)+(s1)+(s2)+(s1)+(s2)+(s1)+(s1)+(s2)+(s2)+(s2)+(s1)+(s1)+(s2)+(s1)+(s2)+(s1)+(s1)+(s2)$)
        -- ($(Origin)+(s1)+(s1)+(s2)+(s1)+(s2)+(s1)+(s1)+(s2)+(s2)+(s2)+(s1)+(s1)+(s2)+(s1)+(s2)+(s1)+(s1)+(s2)+(s2)$) node [above right] {};
        
         \draw [thick,-latex,teal] ($(Origin)+(s1)+(s1)+(s2)+(s1)+(s2)+(s1)+(s1)+(s2)+(s2)+(s2)+(s1)+(s1)+(s2)+(s1)+(s2)+(s1)+(s1)+(s2) +(s2)$)
        -- ($(Origin)+(s1)+(s1)+(s2)+(s1)+(s2)+(s1)+(s1)+(s2)+(s2)+(s2)+(s1)+(s1)+(s2)+(s1)+(s2)+(s1)+(s1)+(s2)+(s2) + (s2)$) node [above right] {};
        
         \draw [thick,-latex,teal] ($(Origin)+(s1)+(s1)+(s2)+(s1)+(s2)+(s1)+(s1)+(s2)+(s2)+(s2)+(s1)+(s1)+(s2)+(s1)+(s2)+(s1)+(s1)+(s2) +(s2)+(s2)$)
        -- ($(Origin)+(s1)+(s1)+(s2)+(s1)+(s2)+(s1)+(s1)+(s2)+(s2)+(s2)+(s1)+(s1)+(s2)+(s1)+(s2)+(s1)+(s1)+(s2)+(s2) + (s2)+(s1)$) node [above right] {};
        
               \draw [thick,-latex,teal] ($(Origin)+(s1)+(s1)+(s2)+(s1)+(s2)+(s1)+(s1)+(s2)+(s2)+(s2)+(s1)+(s1)+(s2)+(s1)+(s2)+(s1)+(s1)+(s2) +(s2)+(s2)+(s1)$)
        -- ($(Origin)+(s1)+(s1)+(s2)+(s1)+(s2)+(s1)+(s1)+(s2)+(s2)+(s2)+(s1)+(s1)+(s2)+(s1)+(s2)+(s1)+(s1)+(s2)+(s2) + (s2)+(s1)+(s2)$) node [above right] {};

  \end{tikzpicture}
  \end{center}
  \caption{A graphical representation of a lattice walk from $(0,0)$ to $(22,0)$ whose steps are either $(1,1)$ or $(1,-1)$.}
 \end{figure}
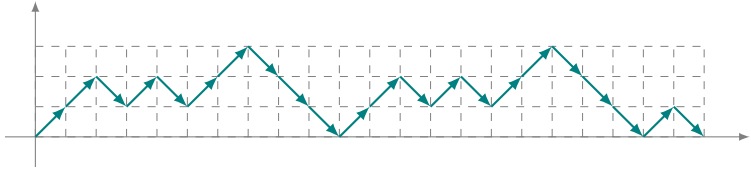

Enumerative combinatorics asks for the value of $f(Q;n)$, whether there is a ``nice'' formula for it in terms of $Q$ and $n$, and what its asymptotic behavior is as $n$ tends to infinity. A first step in answering these questions is to consider the associated generating function and to locate it in the hierarchy of rational, algebraic and D-finite functions -- see Definition~\ref{def:alg} and Definition~\ref{def:dFinite} below. 
As soon as a representation of the generating function in terms of an algebraic or a differential equation is known, there are classes of algorithms that can be applied to derive information about its coefficient sequence~\cite{kauers2023d}. 
The starting point of the investigation is a functional equation for the generating function. In general this equation can be complicated, but sometimes 
it is easily written down.
\bigskip

In the following, and for the rest of the paper, we restrict our discussion to simple walks on the half-line. Let $F(x;t)$ be the generating function of walks in $\mathbb{N}$ which start at $0$ and whose steps are either $1$ or $-1$. A lattice walk of positive length that ends at $i$ is either a walk to $i-1$ followed by a right step, or a walk to $i+1$ followed by a left step.
Hence,
\begin{equation}\label{eq:rec}
f(i;n) = f(i-1;n-1) + f(i+1;n-1), \quad n \geq 1.  
\end{equation}
Furthermore, there is only one walk of length $0$, and there are no walks that end on $i < 0$. Thus, 
\begin{equation}
f(i;0) = \delta_{i,0} \quad \text{and} \quad f(i;n) = 0 \quad \text{if} \quad  i<0.
\end{equation}
Multiplying both sides of equation~\eqref{eq:rec} by $x^it^n$ and summing over all $i \geq 0$ and $n \geq 1$, it easily follows that 
\begin{equation}\label{eq:kernelEq}
F(x;t) = 1 + t(x+\bar{x}) F(x;t) - t\bar{x} F(0;t).
\end{equation} 
This equation uniquely determines $F(x;t)$ as it implies 
\begin{align*}
[t^0] F(x;t) &= 1,\\
[t^{n+1}] F(x;t) &= (x+\bar{x}) [t^n] F(x;t) - \bar{x} [t^n] F(0;t), \quad n\geq 0,
\end{align*}
for the coefficient sequence $([t^n]F(x;t))$. The first terms of $F(x;t)$ are easily computed based thereon. We have  
\begin{equation*}
F(x;t) = 1 + x t + (1 + x^2) t^2 + (2 x + x^3) t^3 + (2 + 3 x^2 + x^4) t^4 + O(t^5)
\end{equation*}
from which we can read off some of the values of $f(i;n)$. 
\bigskip

The existence and uniqueness of a solution of equation~\eqref{eq:kernelEq} in $\mathbb{Q}[x][[t]]$ also follows from Banach's fixed point theorem. For any $F$, $G\in\mathbb{Q}[x][[t]]$, define their distance from each other by
\begin{equation*}
\mathrm{d}(F,G) := 2^{-\mathrm{val}(F-G)},
\end{equation*}
where $\mathrm{val}$ denotes the valuation 
\begin{equation*}
\mathrm{val}(F) := \min\{n:[t^n]F\neq 0\}.
\end{equation*}
It turns $\mathbb{Q}[x][[t]]$ into a complete metric space on which the operator
\begin{equation*}
F(x;t) \mapsto 1 + t(x+\bar{x})F(x;t) - t\bar{x}F(0;t) 
\end{equation*}
acts as a contraction. So there is a unique solution in $\mathbb{Q}[x][[t]]$, and the sequence of iterates of any element in $\mathbb{Q}[x][[t]]$ converges to it. 
\bigskip

To find a closed form expression for $F(x;t)$ it is convenient to rewrite equation~\eqref{eq:kernelEq} in the form
\begin{equation}\label{eq:kEq}
x(1-t(x + \bar{x})) F(x;t) = x - t F(0;t).
\end{equation}
This equation is referred to as the \emph{kernel equation}. The coefficient of $F(x;t)$ is the \emph{kernel polynomial}. The latter plays, as the name indicates, an important role in the different solution strategies subsumed under what is known as the \emph{kernel method}. 

\section{Classical kernel method}\label{sec:kernelMethod}



The kernel equation involves two unknown functions one of which involves only one of  the variables. The classical kernel method is based on the idea of eliminating one of the unknowns by coupling $x$ and $t$ without altering the other unknown. 
\bigskip

Viewed as a polynomial in $x$, the kernel polynomial
\begin{equation*}
K(x,t) := x(1-t(x + \bar{x}))
\end{equation*} 
has two roots,
\begin{equation*}
x_0(t) = \frac{1 - \sqrt{1-4t^2}}{2t} \quad \text{and} \quad x_1(t) = \frac{1 + \sqrt{1-4t^2}}{2t},
\end{equation*}
only one of which, namely $x_0(t)$, can be interpreted as an element of $\mathbb{Q}[[t]]$. The composition $F(x_0(t),t)$ is therefore well-defined, and the substitution of $x_0(t)$ for $x$ in~\eqref{eq:kEq} results in
\begin{equation*}
0 = x_0(t) - tF(0;t).
\end{equation*}
Hence
\begin{equation*}
F(0;t) = \frac{x_0(t)}{t} \quad \text{and} \quad F(x;t) = \frac{1-\bar{x} x_0(t)}{1-t(x+\bar{x})}.
\end{equation*}
\bigskip

This can also be interpreted as follows: the right-hand side of equation~\eqref{eq:kEq} is a polynomial in $x$ over $\mathbb{Q}((t))$. Its degree is $1$, and so is its leading coefficient, and its only root is $t F(0;t)$. Since $x_0(t)$ 
is a root of the left-hand side of the equation, it is necessarily also a root of its right-hand side, and therefore equal to $t F(0;t)$.

\section{Wiener-Hopf factorization}\label{sec:wienerHopf}

Instead of solving equation~\eqref{eq:kEq} by eliminating the unknown on the left-hand side of the equation
one can just as well eliminate~$F(0;t)$ on the right-hand side. There is more than one way of doing so. The one we present here relates to what is known as Wiener-Hopf factorization. It is based on the factorization 
\begin{equation*}
K(x,t) = - t (x- x_0)(x-x_1),
\end{equation*}
and the observation that 
\begin{equation*}
(x - x_0)^{-1} \in\mathbb{Q}[\bar{x}]((t)) \quad \text{and} \quad x-x_1 \in \mathbb{Q}[x]((t)).
\end{equation*}
This can be exploited by dividing the kernel equation by $x-x_0$, resulting in
\begin{equation*}
-t (x-x_1) F(x;t) = \frac{1-\bar{x} t F(0;t)}{1-\bar{x} x_0},
\end{equation*}
an equation whose left-hand side lies in $\mathbb{Q}[x][[t]]$, and whose right-hand side is an element of~$\mathbb{Q}[\bar{x}][[t]]$ whose constant term with respect to $x$ is $1$. Discarding all terms which involve negative powers of $x$ gives an equation which does not involve $F(0;t)$ and from which we deduce that
\begin{equation*}
F(x;t) = -\frac{1}{t(x-x_1)}.
\end{equation*}

\section{Orbit-sum method}\label{sec:orbitSum}

Alternatively one can exploit that $1-t(x + \bar{x})$ is invariant under the natural action of the group generated by the transformation $x\mapsto \bar{x}$. The orbit of the kernel equation under this action consists of two equations, equation~\eqref{eq:kEq} and 
\begin{equation*}
\bar{x} (1 - t(x+\bar{x})) F(\bar{x};t) = \bar{x} - t F(0;t).
\end{equation*}
Subtracting the latter from the first and dividing by $x(1-t(x+\bar{x}))$ results in
\begin{equation*}
F(x;t) - \bar{x}^2 F(\bar{x};t) = \frac{1-\bar{x}^2}{1-t (x+\bar{x})}.
\end{equation*}
As $F(x;t)$ only involves non-negative powers of $x$, while $\bar{x}^2 F(\bar{x};t)$ only involves negative powers of $x$, we can eliminate $\bar{x}^2 F(\bar{x};t)$ by discarding all negative powers of $x$ and conclude that 
\begin{equation*}
F(x;t) = [x^\geq] \frac{1-\bar{x}^2}{1-t (x + \bar{x})}.
\end{equation*}

The expression of $F(x;t)$ as the non-negative part of a rational function allows us to recover the expression from before. A partial fraction decomposition with respect to $x$ results in
\begin{equation*}
\frac{1-\bar{x}^2}{1-t (x+\bar{x})} = \frac{\bar{x}}{t} - \frac{\bar{x}^2 x_0}{t(1-\bar{x}x_0)} + \frac{\bar{x}}{t(1 - x\bar{x}_1)}
\end{equation*}
to which $[x^\geq ]$ easily applies when $ \frac{\bar{x}^2x_0}{1-\bar{x}x_0}$ and $\frac{\bar{x}}{1 - x\bar{x}_1}$ are interpreted as series that are elements of $\mathbb{Q}[x,\bar{x}][[t]]$.

\section{Compositional inverses}\label{sec:compInv}



So far we have considered the kernel polynomial as a polynomial in $x$, but we can just as well consider it as a polynomial in $t$. Its only root is then
\begin{equation*}
G(x) = \frac{x}{1+x^2}.
\end{equation*}
It can be viewed as an element of $x\mathbb{Q}[[x]]$. So the composition $F(x;G(x))$ is well-defined, and the substitution of $G(x)$ for $t$ in~\eqref{eq:kEq} results in 
\begin{equation}\label{eq:lagrange}
\frac{x}{1+x^2} F\left(0;\frac{x}{1+x^2}\right) = x.
\end{equation}
It follows that $G(x)$ is the compositional right-inverse of $x F(0;x)$. We next show how this implies that $F(0;t)$, and consequently $F(x;t)$, is algebraic.

\begin{Definition}\label{def:alg}
A series $F(x;t) \in\mathbb{Q}[x][[t]]$ is \emph{algebraic} over $\mathbb{Q}(x,t)$, if there is a non-zero polynomial $P\in\mathbb{Q}(x,t)[Y]$ such that 
\begin{equation*}
P(x,t,F(x;t)) = 0.
\end{equation*}
If $P$ has leading coefficient $1$, and its degree is minimal among all non-zero polynomials in $\mathbb{Q}(x,t)[Y]$ which have $F(x;t)$ as a root, then $P$ is called the \emph{minimal polynomial} of $F(x;t)$. 
\end{Definition}

Let 
\begin{equation*}
P(x,Y) = x - (1+x^2)Y
\end{equation*}
be the numerator of the minimal polynomial of $G(x)$. Then $P(x,G(x))=0$, and so $P(tF(0;t),t) = 0$ by replacing $x$ by $t F(0;t)$. The latter holds because $G(x)$ has valuation $1$, which guarantees that it has a right-inverse, and because the composition of series is associative, which implies that every right-inverse is also left-inverse. Therefore,~$F(0;t)$ is algebraic and a multiple of its minimal polynomial is 
\begin{equation*}
P(tY,t) = -t (1-Y+t^2Y^2). 
\end{equation*}
The expression for $F(0;t)$ and $F(x;t)$ from before can be recovered from that, and the coefficients of $F(0;t)$ can be computed from~\eqref{eq:lagrange} using \emph{Lagrange inversion}. For the convenience of the reader we recall Lagrange inversion here~\cite{GesselLagrange}.
\begin{Proposition}
Let $H(x)$ be a power series in $x$ whose valuation is $1$, and let $G(x)$ be its compositional inverse, that is, the series satisfying $G(H(x))=x$. Then
\begin{equation*}
[x^n] G^k(x) = \frac{k}{n}[x^{-k}] H^{-n}(x), \quad n\neq 0.
\end{equation*}
\end{Proposition}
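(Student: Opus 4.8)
The plan is to prove the formula via \emph{formal residues}. Writing $\operatorname{res}_x f := [x^{-1}] f$ for the coefficient of $x^{-1}$ of a Laurent series $f \in \mathbb{Q}((x))$, I would rest the whole argument on two elementary facts: first, that $\operatorname{res}_x(f') = 0$ for every $f \in \mathbb{Q}((x))$, since a derivative never produces an $x^{-1}$ term; and second, a \emph{change-of-variables} identity
\[
\operatorname{res}_y\bigl( f(H(y))\, H'(y) \bigr) = \operatorname{res}_x f(x),
\]
valid for every $f \in \mathbb{Q}((x))$ precisely because $H$ has valuation $1$, so that $f(H(y))$ is a well-defined Laurent series. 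With these in hand, the coefficient extraction $[x^n]G(x)^k = \operatorname{res}_x\bigl(x^{-n-1}G(x)^k\bigr)$ can be transported to the $y$-variable, where the relation $G(H(y)) = y$ trivialises the occurrence of $G$.

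I expect the change-of-variables identity to be the main obstacle, so I would establish it first. By linearity it suffices to check it on monomials $f(x) = x^m$. For $m \neq -1$ one writes $x^m = \tfrac{1}{m+1}(x^{m+1})'$, so that $f(H(y))H'(y) = \tfrac{1}{m+1}\bigl(H(y)^{m+1}\bigr)'$ is itself a derivative and both sides vanish by the first fact. The only substantive case is $m = -1$: here $H^{-1}H' = (\log H)'$, and factoring $H(y) = h_1 y\, u(y)$ with $u(0)=1$ gives $H^{-1}H' = y^{-1} + u'/u$, whose residue is $1$, matching $\operatorname{res}_x x^{-1}$.

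Having secured the lemma, the execution is short. Applying the change of variables to $f(x) = x^{-n-1}G(x)^k$ and using $G(H(y)) = y$ turns the residue into $\operatorname{res}_y\bigl( H(y)^{-n-1} y^k H'(y)\bigr)$. Since $n \neq 0$, I would rewrite $H^{-n-1}H' = -\tfrac1n (H^{-n})'$ and then integrate by parts against the zero-residue fact applied to $(y^k H^{-n})'$: from $0 = \operatorname{res}_y\bigl((y^k H^{-n})'\bigr) = k\operatorname{res}_y(y^{k-1}H^{-n}) + \operatorname{res}_y\bigl(y^k(H^{-n})'\bigr)$ one obtains $\operatorname{res}_y\bigl(y^k(H^{-n})'\bigr) = -k\operatorname{res}_y(y^{k-1}H^{-n})$. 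Combining the two identities yields
\[
\operatorname{res}_y\bigl(y^k H^{-n-1}H'\bigr) = \tfrac{k}{n}\operatorname{res}_y(y^{k-1}H^{-n}) = \tfrac{k}{n}[y^{-k}]H(y)^{-n},
\]
which is exactly the claimed formula. The only points needing care throughout are that every manipulation stays inside $\mathbb{Q}((x))$, that the substitution lemma is legitimate because $H$ has valuation exactly $1$, and that the hypothesis $n \neq 0$ is used precisely once, in passing from $H^{-n-1}H'$ to a derivative.
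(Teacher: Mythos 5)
Your argument is correct. Note, however, that the paper does not prove this proposition at all: it merely recalls it ``for the convenience of the reader'' with a citation to Gessel's survey on Lagrange inversion, so there is no in-paper proof to compare against. Your residue computation --- the zero-residue-of-derivatives fact plus the change-of-variables lemma $\operatorname{res}_y\bigl(f(H(y))H'(y)\bigr)=\operatorname{res}_x f(x)$, verified on monomials with the single substantive case $m=-1$ --- is precisely the classical proof found in that reference, and every step checks out: the reduction of $H^{-n-1}H'$ to $-\tfrac1n(H^{-n})'$ uses $n\neq 0$ exactly where you say it does, and the integration by parts against $(y^kH^{-n})'$ closes the argument. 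The only point worth making explicit is that the term-by-term verification of the change-of-variables lemma extends to an arbitrary $f\in\mathbb{Q}((x))$ because $H^mH'$ has valuation $m$, so only the finitely many monomials of $f$ with $m\le -1$ can contribute to the residue; this is implicit in your remark that $f(H(y))$ is a well-defined Laurent series, but deserves a sentence.
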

Applying Lagrange inversion to $x/(1+x^2)$ we find 
\begin{equation}\label{eq:lagrange1}
f(0;n) = [t^{n+1}] t F(0;t) = \frac{1}{n+1} [t^{-1}] (t + t^{-1})^{n+1} =
 \begin{cases} 
   0,\quad  \text{if}\quad  n \not\equiv 0 \mod 2,\\
   \frac{1}{n/2+1} \binom{n}{n/2} \quad \text{otherwise}.
 \end{cases}
\end{equation}

\section{Invariant method}\label{sec:invMethod}

The symmetry of the kernel polynomial the orbit-sum method is based on is not the only symmetry that can be exploited to solve the kernel equation. There is also a symmetry that appears in $K(x,t)$ having a (non-trivial) rational multiple in $\mathbb{Q}(x) + \mathbb{Q}(t)$. One such multiple is
\begin{equation}\label{eq:sepM}
-\frac{K(x,t)}{x t} = x + \bar{x} - \frac{1}{t},
\end{equation}
which shows that the subfields of the function field of the curve defined by $K(x,t)$ generated by $x$ and $t$, respectively, have a non-trivial intersection. It is generated by each of $t$ and $x + \bar{x}$. 
\bigskip

Now how is this useful? We can eliminate all terms involving $x$ from the right-hand side of equation~\eqref{eq:sepM} by subtracting the kernel equation and adding
\begin{equation*}
K(x,t) \frac{F(x;t)}{x t F(0;t)} = \frac{1}{tF(0;t)} - \bar{x}.
\end{equation*}
Multiplication of the resulting equation by $tF(0;t)$ gives
\begin{equation*}
K(x,t) \left( 
-\bar{x} F(0;t) - tF(0;t)F(x;t) + \bar{x} F(x;t) \right) = 1 - F(0;t) + t^2 F(0;t)^2
\end{equation*}
The valuations of $K(x;t)$ and $1 - F(0;t) + t^2 F(0;t)^2$ are zero with respect to both $x$ and $t$, and hence so are
the valuations of the coefficient of $K(x,t)$ on the left-hand side of the equation. Its right-hand side is therefore a multiple of $K(x,t)$ in $\mathbb{Q}[[x,t]]$ that does not depend on $x$. Consider now a monomial order on monomials in $x$ and $t$ with $x < t$. The smallest term in $K(x,t)$ is $x$, so the smallest term on the left-hand side of the equation is a multiple of $x$. But the right-hand side does not involve $x$. Thus it is zero and 
\begin{equation*}
1 - F(0;t) + t^2 F(0;t)^2 = 0.
\end{equation*}

\section{Continued fractions}\label{sec:contFrac}

The algebraic equation a generating function satisfies often has a combinatorial interpretation. The equation 
\begin{equation}\label{eq:returns}
F(0;t) = 1 + t^2 F(0;t)^2,
\end{equation}
reflects that any walk in $\mathbb{N}$ that starts and ends in~$0$ is either of length~$0$, or a walk that consists of a right step, followed by an excursion which does not visit~$0$, possibly of length $0$, followed by a left step, possibly continued by another excursion. 
\begin{figure}[ht]
\hspace{-1.5cm}
  \centering
  \begin{center}
  \begin{tikzpicture}[scale=.4]
   
    \coordinate (Origin)   at (-2,-1);
    \coordinate (XAxisMin) at (-3,-1);
    \coordinate (XAxisMax) at (21.5,-1);
    \coordinate (YAxisMin) at (-2,-2);
    \coordinate (YAxisMax) at (-2,3.5);
    \draw [thin, gray,-latex] (XAxisMin) -- (XAxisMax);
    \draw [thin, gray,-latex] (YAxisMin) -- (YAxisMax);
 
    \coordinate (s1) at (1,1);
    \coordinate (s2) at (1,-1);

    \draw[style=help lines,dashed] (-2,-1) grid[step=1cm] (20,2);


%
%
%
%
%
%
        
           \draw [thick,-latex, blue] ($(Origin)$)
        -- ($(Origin)+(s1)$) node [above right] {};
        
               \draw [thick,-latex,teal] ($(Origin)+(s1)$)
        -- ($(Origin)+(s1)+(s1)$) node [above right] {};

               \draw [thick,-latex,teal] ($(Origin)+(s1)+(s1)$)
        -- ($(Origin)+(s1)+(s1)+(s2)$) node [above right] {};
        
                \draw [thick,-latex,teal] ($(Origin)+(s1)+(s1)+(s2)$)
        -- ($(Origin)+(s1)+(s1)+(s2)+(s1)$) node [above right] {};
        
                 \draw [thick,-latex,teal] ($(Origin)+(s1)+(s1)+(s2)+(s1)$)
        -- ($(Origin)+(s1)+(s1)+(s2)+(s1)+(s2)$) node [above right] {};
        
        \draw [thick,-latex,teal] ($(Origin)+(s1)+(s1)+(s2)+(s1)+(s2)$)
        -- ($(Origin)+(s1)+(s1)+(s2)+(s1)+(s2)+(s1)$) node [above right] {};
        
           \draw [thick,-latex,teal] ($(Origin)+(s1)+(s1)+(s2)+(s1)+(s2)+(s1)$)
        -- ($(Origin)+(s1)+(s1)+(s2)+(s1)+(s2)+(s1)+(s1)$) node [above right] {};
        
            \draw [thick,-latex,teal] ($(Origin)+(s1)+(s1)+(s2)+(s1)+(s2)+(s1)+(s1)$)
        -- ($(Origin)+(s1)+(s1)+(s2)+(s1)+(s2)+(s1)+(s1)+(s2)$) node [above right] {};
   
     \draw [thick,-latex,teal] ($(Origin)+(s1)+(s1)+(s2)+(s1)+(s2)+(s1)+(s1)+(s2)$)
        -- ($(Origin)+(s1)+(s1)+(s2)+(s1)+(s2)+(s1)+(s1)+(s2)+(s2)$) node [above right] {};
        
         \draw [thick,-latex, blue] ($(Origin)+(s1)+(s1)+(s2)+(s1)+(s2)+(s1)+(s1)+(s2)+(s2)$)
        -- ($(Origin)+(s1)+(s1)+(s2)+(s1)+(s2)+(s1)+(s1)+(s2)+(s2)+(s2)$) node [above right] {};
        
             \draw [thick,-latex,violet] ($(Origin)+(s1)+(s1)+(s2)+(s1)+(s2)+(s1)+(s1)+(s2)+(s2)+(s2)$)
        -- ($(Origin)+(s1)+(s1)+(s2)+(s1)+(s2)+(s1)+(s1)+(s2)+(s2)+(s2)+(s1)$) node [above right] {};
        
         \draw [thick,-latex,violet] ($(Origin)+(s1)+(s1)+(s2)+(s1)+(s2)+(s1)+(s1)+(s2)+(s2)+(s2)+(s1)$)
        -- ($(Origin)+(s1)+(s1)+(s2)+(s1)+(s2)+(s1)+(s1)+(s2)+(s2)+(s2)+(s1)+(s1)$) node [above right] {};
        
         \draw [thick,-latex,violet] ($(Origin)+(s1)+(s1)+(s2)+(s1)+(s2)+(s1)+(s1)+(s2)+(s2)+(s2)+(s1)+(s1)$)
        -- ($(Origin)+(s1)+(s1)+(s2)+(s1)+(s2)+(s1)+(s1)+(s2)+(s2)+(s2)+(s1)+(s1)+(s2)$) node [above right] {};

 \draw [thick,-latex,violet] ($(Origin)+(s1)+(s1)+(s2)+(s1)+(s2)+(s1)+(s1)+(s2)+(s2)+(s2)+(s1)+(s1)+(s2)$)
        -- ($(Origin)+(s1)+(s1)+(s2)+(s1)+(s2)+(s1)+(s1)+(s2)+(s2)+(s2)+(s1)+(s1)+(s2)+(s1)$) node [above right] {};
        
         \draw [thick,-latex,violet] ($(Origin)+(s1)+(s1)+(s2)+(s1)+(s2)+(s1)+(s1)+(s2)+(s2)+(s2)+(s1)+(s1)+(s2)+(s1)$)
        -- ($(Origin)+(s1)+(s1)+(s2)+(s1)+(s2)+(s1)+(s1)+(s2)+(s2)+(s2)+(s1)+(s1)+(s2)+(s1)+(s2)$) node [above right] {};

    \draw [thick,-latex,violet] ($(Origin)+(s1)+(s1)+(s2)+(s1)+(s2)+(s1)+(s1)+(s2)+(s2)+(s2)+(s1)+(s1)+(s2)+(s1)+(s2)$)
        -- ($(Origin)+(s1)+(s1)+(s2)+(s1)+(s2)+(s1)+(s1)+(s2)+(s2)+(s2)+(s1)+(s1)+(s2)+(s1)+(s2)+(s1)$) node [above right] {};
        
          \draw [thick,-latex,violet] ($(Origin)+(s1)+(s1)+(s2)+(s1)+(s2)+(s1)+(s1)+(s2)+(s2)+(s2)+(s1)+(s1)+(s2)+(s1)+(s2)+(s1)$)
        -- ($(Origin)+(s1)+(s1)+(s2)+(s1)+(s2)+(s1)+(s1)+(s2)+(s2)+(s2)+(s1)+(s1)+(s2)+(s1)+(s2)+(s1)+(s1)$) node [above right] {};
        
            \draw [thick,-latex,violet] ($(Origin)+(s1)+(s1)+(s2)+(s1)+(s2)+(s1)+(s1)+(s2)+(s2)+(s2)+(s1)+(s1)+(s2)+(s1)+(s2)+(s1)+(s1)$)
        -- ($(Origin)+(s1)+(s1)+(s2)+(s1)+(s2)+(s1)+(s1)+(s2)+(s2)+(s2)+(s1)+(s1)+(s2)+(s1)+(s2)+(s1)+(s1)+(s2)$) node [above right] {};

    \draw [thick,-latex,violet] ($(Origin)+(s1)+(s1)+(s2)+(s1)+(s2)+(s1)+(s1)+(s2)+(s2)+(s2)+(s1)+(s1)+(s2)+(s1)+(s2)+(s1)+(s1)+(s2)$)
        -- ($(Origin)+(s1)+(s1)+(s2)+(s1)+(s2)+(s1)+(s1)+(s2)+(s2)+(s2)+(s1)+(s1)+(s2)+(s1)+(s2)+(s1)+(s1)+(s2)+(s2)$) node [above right] {};
        
         \draw [thick,-latex,violet] ($(Origin)+(s1)+(s1)+(s2)+(s1)+(s2)+(s1)+(s1)+(s2)+(s2)+(s2)+(s1)+(s1)+(s2)+(s1)+(s2)+(s1)+(s1)+(s2) +(s2)$)
        -- ($(Origin)+(s1)+(s1)+(s2)+(s1)+(s2)+(s1)+(s1)+(s2)+(s2)+(s2)+(s1)+(s1)+(s2)+(s1)+(s2)+(s1)+(s1)+(s2)+(s2) + (s2)$) node [above right] {};
        
         \draw [thick,-latex,violet] ($(Origin)+(s1)+(s1)+(s2)+(s1)+(s2)+(s1)+(s1)+(s2)+(s2)+(s2)+(s1)+(s1)+(s2)+(s1)+(s2)+(s1)+(s1)+(s2) +(s2)+(s2)$)
        -- ($(Origin)+(s1)+(s1)+(s2)+(s1)+(s2)+(s1)+(s1)+(s2)+(s2)+(s2)+(s1)+(s1)+(s2)+(s1)+(s2)+(s1)+(s1)+(s2)+(s2) + (s2)+(s1)$) node [above right] {};
        
               \draw [thick,-latex,violet] ($(Origin)+(s1)+(s1)+(s2)+(s1)+(s2)+(s1)+(s1)+(s2)+(s2)+(s2)+(s1)+(s1)+(s2)+(s1)+(s2)+(s1)+(s1)+(s2) +(s2)+(s2)+(s1)$)
        -- ($(Origin)+(s1)+(s1)+(s2)+(s1)+(s2)+(s1)+(s1)+(s2)+(s2)+(s2)+(s1)+(s1)+(s2)+(s1)+(s2)+(s1)+(s1)+(s2)+(s2) + (s2)+(s1)+(s2)$) node [above right] {};

  \end{tikzpicture}
  \end{center}
  \caption{A lattice walk and its ``first passage'' decomposition.}
 \end{figure}
This decomposition of a lattice walk is referred to as the ``first passage'' decomposition. It is distinguished from the ``arch'' decomposition. The latter decomposes a path into a sequence of triples consisting of a right step, a path which does not visit $0$ and a left step. 
\begin{figure}[ht]
\hspace{-1.5cm}
  \centering
  \begin{center}
  \begin{tikzpicture}[scale=.4]
   
    \coordinate (Origin)   at (-2,-1);
    \coordinate (XAxisMin) at (-3,-1);
    \coordinate (XAxisMax) at (21.5,-1);
    \coordinate (YAxisMin) at (-2,-2);
    \coordinate (YAxisMax) at (-2,3.5);
    \draw [thin, gray,-latex] (XAxisMin) -- (XAxisMax);
    \draw [thin, gray,-latex] (YAxisMin) -- (YAxisMax);
 
    \coordinate (s1) at (1,1);
    \coordinate (s2) at (1,-1);

    \draw[style=help lines,dashed] (-2,-1) grid[step=1cm] (20,2);


%
%
%
%
%
%
        
           \draw [thick,-latex,blue] ($(Origin)$)
        -- ($(Origin)+(s1)$) node [above right] {};
        
               \draw [thick,-latex,teal] ($(Origin)+(s1)$)
        -- ($(Origin)+(s1)+(s1)$) node [above right] {};

               \draw [thick,-latex,teal] ($(Origin)+(s1)+(s1)$)
        -- ($(Origin)+(s1)+(s1)+(s2)$) node [above right] {};
        
                \draw [thick,-latex,teal] ($(Origin)+(s1)+(s1)+(s2)$)
        -- ($(Origin)+(s1)+(s1)+(s2)+(s1)$) node [above right] {};
        
                 \draw [thick,-latex,teal] ($(Origin)+(s1)+(s1)+(s2)+(s1)$)
        -- ($(Origin)+(s1)+(s1)+(s2)+(s1)+(s2)$) node [above right] {};
        
        \draw [thick,-latex,teal] ($(Origin)+(s1)+(s1)+(s2)+(s1)+(s2)$)
        -- ($(Origin)+(s1)+(s1)+(s2)+(s1)+(s2)+(s1)$) node [above right] {};
        
           \draw [thick,-latex,teal] ($(Origin)+(s1)+(s1)+(s2)+(s1)+(s2)+(s1)$)
        -- ($(Origin)+(s1)+(s1)+(s2)+(s1)+(s2)+(s1)+(s1)$) node [above right] {};
        
            \draw [thick,-latex,teal] ($(Origin)+(s1)+(s1)+(s2)+(s1)+(s2)+(s1)+(s1)$)
        -- ($(Origin)+(s1)+(s1)+(s2)+(s1)+(s2)+(s1)+(s1)+(s2)$) node [above right] {};
   
     \draw [thick,-latex,teal] ($(Origin)+(s1)+(s1)+(s2)+(s1)+(s2)+(s1)+(s1)+(s2)$)
        -- ($(Origin)+(s1)+(s1)+(s2)+(s1)+(s2)+(s1)+(s1)+(s2)+(s2)$) node [above right] {};
        
         \draw [thick,-latex,blue] ($(Origin)+(s1)+(s1)+(s2)+(s1)+(s2)+(s1)+(s1)+(s2)+(s2)$)
        -- ($(Origin)+(s1)+(s1)+(s2)+(s1)+(s2)+(s1)+(s1)+(s2)+(s2)+(s2)$) node [above right] {};
        
             \draw [thick,-latex,blue] ($(Origin)+(s1)+(s1)+(s2)+(s1)+(s2)+(s1)+(s1)+(s2)+(s2)+(s2)$)
        -- ($(Origin)+(s1)+(s1)+(s2)+(s1)+(s2)+(s1)+(s1)+(s2)+(s2)+(s2)+(s1)$) node [above right] {};
        
         \draw [thick,-latex,teal] ($(Origin)+(s1)+(s1)+(s2)+(s1)+(s2)+(s1)+(s1)+(s2)+(s2)+(s2)+(s1)$)
        -- ($(Origin)+(s1)+(s1)+(s2)+(s1)+(s2)+(s1)+(s1)+(s2)+(s2)+(s2)+(s1)+(s1)$) node [above right] {};
        
         \draw [thick,-latex,teal] ($(Origin)+(s1)+(s1)+(s2)+(s1)+(s2)+(s1)+(s1)+(s2)+(s2)+(s2)+(s1)+(s1)$)
        -- ($(Origin)+(s1)+(s1)+(s2)+(s1)+(s2)+(s1)+(s1)+(s2)+(s2)+(s2)+(s1)+(s1)+(s2)$) node [above right] {};

 \draw [thick,-latex,teal] ($(Origin)+(s1)+(s1)+(s2)+(s1)+(s2)+(s1)+(s1)+(s2)+(s2)+(s2)+(s1)+(s1)+(s2)$)
        -- ($(Origin)+(s1)+(s1)+(s2)+(s1)+(s2)+(s1)+(s1)+(s2)+(s2)+(s2)+(s1)+(s1)+(s2)+(s1)$) node [above right] {};
        
         \draw [thick,-latex,teal] ($(Origin)+(s1)+(s1)+(s2)+(s1)+(s2)+(s1)+(s1)+(s2)+(s2)+(s2)+(s1)+(s1)+(s2)+(s1)$)
        -- ($(Origin)+(s1)+(s1)+(s2)+(s1)+(s2)+(s1)+(s1)+(s2)+(s2)+(s2)+(s1)+(s1)+(s2)+(s1)+(s2)$) node [above right] {};

    \draw [thick,-latex,teal] ($(Origin)+(s1)+(s1)+(s2)+(s1)+(s2)+(s1)+(s1)+(s2)+(s2)+(s2)+(s1)+(s1)+(s2)+(s1)+(s2)$)
        -- ($(Origin)+(s1)+(s1)+(s2)+(s1)+(s2)+(s1)+(s1)+(s2)+(s2)+(s2)+(s1)+(s1)+(s2)+(s1)+(s2)+(s1)$) node [above right] {};
        
          \draw [thick,-latex,teal] ($(Origin)+(s1)+(s1)+(s2)+(s1)+(s2)+(s1)+(s1)+(s2)+(s2)+(s2)+(s1)+(s1)+(s2)+(s1)+(s2)+(s1)$)
        -- ($(Origin)+(s1)+(s1)+(s2)+(s1)+(s2)+(s1)+(s1)+(s2)+(s2)+(s2)+(s1)+(s1)+(s2)+(s1)+(s2)+(s1)+(s1)$) node [above right] {};
        
            \draw [thick,-latex,teal] ($(Origin)+(s1)+(s1)+(s2)+(s1)+(s2)+(s1)+(s1)+(s2)+(s2)+(s2)+(s1)+(s1)+(s2)+(s1)+(s2)+(s1)+(s1)$)
        -- ($(Origin)+(s1)+(s1)+(s2)+(s1)+(s2)+(s1)+(s1)+(s2)+(s2)+(s2)+(s1)+(s1)+(s2)+(s1)+(s2)+(s1)+(s1)+(s2)$) node [above right] {};

    \draw [thick,-latex,teal] ($(Origin)+(s1)+(s1)+(s2)+(s1)+(s2)+(s1)+(s1)+(s2)+(s2)+(s2)+(s1)+(s1)+(s2)+(s1)+(s2)+(s1)+(s1)+(s2)$)
        -- ($(Origin)+(s1)+(s1)+(s2)+(s1)+(s2)+(s1)+(s1)+(s2)+(s2)+(s2)+(s1)+(s1)+(s2)+(s1)+(s2)+(s1)+(s1)+(s2)+(s2)$) node [above right] {};
        
         \draw [thick,-latex,blue] ($(Origin)+(s1)+(s1)+(s2)+(s1)+(s2)+(s1)+(s1)+(s2)+(s2)+(s2)+(s1)+(s1)+(s2)+(s1)+(s2)+(s1)+(s1)+(s2) +(s2)$)
        -- ($(Origin)+(s1)+(s1)+(s2)+(s1)+(s2)+(s1)+(s1)+(s2)+(s2)+(s2)+(s1)+(s1)+(s2)+(s1)+(s2)+(s1)+(s1)+(s2)+(s2) + (s2)$) node [above right] {};
        
         \draw [thick,-latex,blue] ($(Origin)+(s1)+(s1)+(s2)+(s1)+(s2)+(s1)+(s1)+(s2)+(s2)+(s2)+(s1)+(s1)+(s2)+(s1)+(s2)+(s1)+(s1)+(s2) +(s2)+(s2)$)
        -- ($(Origin)+(s1)+(s1)+(s2)+(s1)+(s2)+(s1)+(s1)+(s2)+(s2)+(s2)+(s1)+(s1)+(s2)+(s1)+(s2)+(s1)+(s1)+(s2)+(s2) + (s2)+(s1)$) node [above right] {};
        
               \draw [thick,-latex,blue] ($(Origin)+(s1)+(s1)+(s2)+(s1)+(s2)+(s1)+(s1)+(s2)+(s2)+(s2)+(s1)+(s1)+(s2)+(s1)+(s2)+(s1)+(s1)+(s2) +(s2)+(s2)+(s1)$)
        -- ($(Origin)+(s1)+(s1)+(s2)+(s1)+(s2)+(s1)+(s1)+(s2)+(s2)+(s2)+(s1)+(s1)+(s2)+(s1)+(s2)+(s1)+(s1)+(s2)+(s2) + (s2)+(s1)+(s2)$) node [above right] {};

  \end{tikzpicture}
  \end{center}
  \caption{A lattice walk and its ``arch'' decomposition.}
 \end{figure}
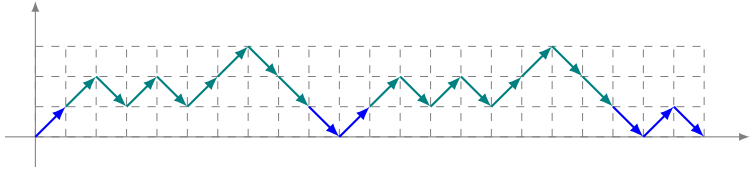
It gives rise to the following equivalent equation
\begin{equation*}
F(0;t) = \frac{1}{1- t^2 F(0;t)}.
\end{equation*}
Applied recursively it implies a representation of $F(0;t)$ as the \emph{continued fraction}
\begin{equation}\label{eq:contFraction}
F(0;t) = \frac{1}{1-\frac{t^2}{1-\frac{t^2}{1-\dots}}}.
\end{equation}
The finite continued fractions $(F_k)$ defined by 
\begin{equation*}
F_0 = 1, \quad \text{and} \quad F_k = \frac{1}{1-t^2 F_{k-1}}, \quad k\geq 1,
\end{equation*}
are called the \emph{convergents} of the continued fraction~\eqref{eq:contFraction}. Of course the convergents $(F_k)$ converge to $F(0;t)$ in the sense that  
\begin{equation*}
\lim_{k \to \infty} \val (F(0;t) - F_k ) = 0.
\end{equation*}
The convergents have a combinatorial meaning too: $F_k$ is the generating function of excursions on $\{0, 1, \dots, k\}$ that start at $0$ and take their steps from $\{-1,1\}$. This is clearly true for $F_0$, and since any excursion on $\{0,1, \dots,k\}$ is a sequence of walks that start and end with a right and left step, respectively, and whose other steps form a walk on $\{0,\dots,k-1\}$, it is easily seen that it also holds for $k \geq 1$. The question how to compute the coefficients of the series expansion of $F_k$ efficiently relates the enumeration of lattice walks to the theory of continued fractions and orthogonal polynomials.

\section{Reflection principle}\label{sec:reflecPrinciple}
It is natural to ask whether also the other expressions we have found for $F(x;t)$ and its evaluation give more insight into the combinatorial problem, apart from the fact that they represent the generating function of the counting sequence. It is obvious how to interpret the right-hand side of
\begin{equation*}
F(x;t) =  \frac{1-\bar{x} x_0}{1-t(x+\bar{x})} 
\end{equation*}
combinatorially having in mind that $F(0;t) = x_0(t)/t$ is the generating function of walks in $\mathbb{N}$ that start and end in $0$ and $\frac{1}{1-t(x+\bar{x})}$ represents the generating function of walks in $\mathbb{Z}$ that start at $0$. It is the generating function of walks in $\mathbb{Z}$ that start at $0$ minus the generating function of the subset of those walks which do not lie in $\mathbb{N}$. It might be less obvious how to interpret the right-hand side of 
\begin{equation}\label{eq:posPart}
F(x;t) = [x^\geq] \frac{1-\bar{x}^2}{1-t (x+\bar{x})}
\end{equation}
in this respect. The right-hand side of the equation reflects the existence of a bijection between walks in $\mathbb{Z}$ that start at $0$, end in $\mathbb{N}$ but do not lie in $\mathbb{N}$ and walks that start at $-2$ and end in $\mathbb{N}$. It is defined by reflecting the initial part of a walk up to its last visit of $-1$ at $-1$. See the figure below for an illustration. As the bijection neither affects the length nor the end point of a walk, this provides another argument why identity~\eqref{eq:posPart} holds. 

\begin{figure}[ht]
\hspace{-1.5cm}
  \centering
  \begin{center}
  \begin{tikzpicture}[scale=.4]
   
    \coordinate (Origin)   at (-2,-1);
    \coordinate (XAxisMin) at (-2,-1);
    \coordinate (XAxisMax) at (21.5,-1);
    \coordinate (YAxisMin) at (-2,-6);
    \coordinate (YAxisMax) at (-2,2.5);
    \draw [thin, gray,-latex] (XAxisMin) -- (XAxisMax);
    \draw [thin, gray,-latex] (YAxisMin) -- (YAxisMax);

\draw [thin, color=red!60] (-2,-2) -- (20, -2);

    \coordinate (s1) at (1,1);
    \coordinate (s2) at (1,-1);

    \draw[style=help lines,dashed] (-2,-5) grid[step=1cm] (20,1);


    \draw [thick,-latex,teal] (Origin)
        -- ($(Origin)+(s1)$) ;

    \draw [thick,-latex,teal] ($(Origin)+(s1)$)
        -- ($(Origin)+(s1)+(s2)$) node [above right] {};

    \draw [thick,-latex,teal] ($(Origin)+(s1)+(s2)$)
        -- ($(Origin)+(s1)+(s2)+(s1)$) node [above right] {};

    \draw [thick,-latex,teal] ($(Origin)+(s1)+(s2)+(s1)$)
        -- ($(Origin)+(s1)+(s2)+(s1)+(s1)$) node [above right] {};

    \draw [thick,-latex,teal] ($(Origin)+(s1)+(s2)+(s1)+(s1)$)
        -- ($(Origin)+(s1)+(s2)+(s1)+(s1)+(s2)$) node [above right] {};

    \draw [thick,-latex,teal] ($(Origin)+(s1)+(s2)+(s1)+(s1)+(s2)$)
        -- ($(Origin)+(s1)+(s2)+(s1)+(s1)+(s2)+(s2)$) node [above right] {};
        
        \draw [thick,-latex,teal] ($(Origin)+(s1)+(s2)+(s1)+(s1)+(s2)+(s2)$)
        -- ($(Origin)+(s1)+(s2)+(s1)+(s1)+(s2)+(s2)+(s2)$) node [above right] {};

              \draw [thick,-latex,teal] ($(Origin)+(s1)+(s2)+(s1)+(s1)+(s2)+(s2)+(s2)$)
        -- ($(Origin)+(s1)+(s2)+(s1)+(s1)+(s2)+(s2)+(s2)+(s1)$) node [above right] {};

            \draw [thick,-latex,teal] ($(Origin)+(s1)+(s2)+(s1)+(s1)+(s2)+(s2)+(s2)+(s1)$)
        -- ($(Origin)+(s1)+(s2)+(s1)+(s1)+(s2)+(s2)+(s2)+(s1)+(s2)$) node [above right] {};

           \draw [thick,-latex,teal] ($(Origin)+(s1)+(s2)+(s1)+(s1)+(s2)+(s2)+(s2)+(s1)+(s2)$)
        -- ($(Origin)+(s1)+(s2)+(s1)+(s1)+(s2)+(s2)+(s2)+(s1)+(s2)+(s1)$) node [above right] {};
        
               \draw [thick,-latex,teal] ($(Origin)+(s1)+(s2)+(s1)+(s1)+(s2)+(s2)+(s2)+(s1)+(s2)+(s1)$)
        -- ($(Origin)+(s1)+(s2)+(s1)+(s1)+(s2)+(s2)+(s2)+(s1)+(s2)+(s1)+(s1)$) node [above right] {};

               \draw [thick,-latex,teal] ($(Origin)+(s1)+(s2)+(s1)+(s1)+(s2)+(s2)+(s2)+(s1)+(s2)+(s1)+(s1)$)
        -- ($(Origin)+(s1)+(s2)+(s1)+(s1)+(s2)+(s2)+(s2)+(s1)+(s2)+(s1)+(s1)+(s2)$) node [above right] {};
        
                \draw [thick,-latex,teal] ($(Origin)+(s1)+(s2)+(s1)+(s1)+(s2)+(s2)+(s2)+(s1)+(s2)+(s1)+(s1)+(s2)$)
        -- ($(Origin)+(s1)+(s2)+(s1)+(s1)+(s2)+(s2)+(s2)+(s1)+(s2)+(s1)+(s1)+(s2)+(s1)$) node [above right] {};
        
                 \draw [thick,-latex,teal] ($(Origin)+(s1)+(s2)+(s1)+(s1)+(s2)+(s2)+(s2)+(s1)+(s2)+(s1)+(s1)+(s2)+(s1)$)
        -- ($(Origin)+(s1)+(s2)+(s1)+(s1)+(s2)+(s2)+(s2)+(s1)+(s2)+(s1)+(s1)+(s2)+(s1)+(s2)$) node [above right] {};
        
        \draw [thick,-latex,teal] ($(Origin)+(s1)+(s2)+(s1)+(s1)+(s2)+(s2)+(s2)+(s1)+(s2)+(s1)+(s1)+(s2)+(s1)+(s2)$)
        -- ($(Origin)+(s1)+(s2)+(s1)+(s1)+(s2)+(s2)+(s2)+(s1)+(s2)+(s1)+(s1)+(s2)+(s1)+(s2)+(s1)$) node [above right] {};
        
           \draw [thick,-latex,teal] ($(Origin)+(s1)+(s2)+(s1)+(s1)+(s2)+(s2)+(s2)+(s1)+(s2)+(s1)+(s1)+(s2)+(s1)+(s2)+(s1)$)
        -- ($(Origin)+(s1)+(s2)+(s1)+(s1)+(s2)+(s2)+(s2)+(s1)+(s2)+(s1)+(s1)+(s2)+(s1)+(s2)+(s1)+(s1)$) node [above right] {};
        
            \draw [thick,-latex,teal] ($(Origin)+(s1)+(s2)+(s1)+(s1)+(s2)+(s2)+(s2)+(s1)+(s2)+(s1)+(s1)+(s2)+(s1)+(s2)+(s1)+(s1)$)
        -- ($(Origin)+(s1)+(s2)+(s1)+(s1)+(s2)+(s2)+(s2)+(s1)+(s2)+(s1)+(s1)+(s2)+(s1)+(s2)+(s1)+(s1)+(s2)$) node [above right] {};
   
     \draw [thick,-latex,teal] ($(Origin)+(s1)+(s2)+(s1)+(s1)+(s2)+(s2)+(s2)+(s1)+(s2)+(s1)+(s1)+(s2)+(s1)+(s2)+(s1)+(s1)+(s2)$)
        -- ($(Origin)+(s1)+(s2)+(s1)+(s1)+(s2)+(s2)+(s2)+(s1)+(s2)+(s1)+(s1)+(s2)+(s1)+(s2)+(s1)+(s1)+(s2)+(s2)$) node [above right] {};
        
         \draw [thick,-latex,teal] ($(Origin)+(s1)+(s2)+(s1)+(s1)+(s2)+(s2)+(s2)+(s1)+(s2)+(s1)+(s1)+(s2)+(s1)+(s2)+(s1)+(s1)+(s2)+(s2)$)
        -- ($(Origin)+(s1)+(s2)+(s1)+(s1)+(s2)+(s2)+(s2)+(s1)+(s2)+(s1)+(s1)+(s2)+(s1)+(s2)+(s1)+(s1)+(s2)+(s2)+(s2)$) node [above right] {};
        
             \draw [thick,-latex,teal] ($(Origin)+(s1)+(s2)+(s1)+(s1)+(s2)+(s2)+(s2)+(s1)+(s2)+(s1)+(s1)+(s2)+(s1)+(s2)+(s1)+(s1)+(s2)+(s2)+(s2)$)
        -- ($(Origin)+(s1)+(s2)+(s1)+(s1)+(s2)+(s2)+(s2)+(s1)+(s2)+(s1)+(s1)+(s2)+(s1)+(s2)+(s1)+(s1)+(s2)+(s2)+(s2)+(s1)$) node [above right] {};
        
         \draw [thick,-latex,teal] ($(Origin)+(s1)+(s2)+(s1)+(s1)+(s2)+(s2)+(s2)+(s1)+(s2)+(s1)+(s1)+(s2)+(s1)+(s2)+(s1)+(s1)+(s2)+(s2)+(s2)+(s1)$)
        -- ($(Origin)+(s1)+(s2)+(s1)+(s1)+(s2)+(s2)+(s2)+(s1)+(s2)+(s1)+(s1)+(s2)+(s1)+(s2)+(s1)+(s1)+(s2)+(s2)+(s2)+(s1)+(s1)$) node [above right] {};
        
         \draw [thick,-latex,teal] ($(Origin)+(s1)+(s2)+(s1)+(s1)+(s2)+(s2)+(s2)+(s1)+(s2)+(s1)+(s1)+(s2)+(s1)+(s2)+(s1)+(s1)+(s2)+(s2)+(s2)+(s1)+(s1)$)
        -- ($(Origin)+(s1)+(s2)+(s1)+(s1)+(s2)+(s2)+(s2)+(s1)+(s2)+(s1)+(s1)+(s2)+(s1)+(s2)+(s1)+(s1)+(s2)+(s2)+(s2)+(s1)+(s1)+(s2)$) node [above right] {};

  \draw [thick,-latex, color=red!60] (-2,-3)
        -- ($(-2,-3)+(s2)$) ;

    \draw [thick,-latex, color=red!60] ($(-2,-3)+(s2)$)
        -- ($(-2,-3)+(s2)+(s1)$) node [above right] {};

    \draw [thick,-latex, color=red!60] ($(-2,-3)+(s2)+(s1)$)
        -- ($(-2,-3)+(s2)+(s1)+(s2)$) node [above right] {};

    \draw [thick,-latex, color=red!60] ($(-2,-3)+(s2)+(s1)+(s2)$)
        -- ($(-2,-3)+(s2)+(s1)+(s2)+(s2)$) node [above right] {};

    \draw [thick,-latex, color=red!60] ($(-2,-3)+(s2)+(s1)+(s2)+(s2)$)
        -- ($(-2,-3)+(s2)+(s1)+(s2)+(s2)+(s1)$) node [above right] {};

    \draw [thick,-latex, color=red!60] ($(-2,-3)+(s2)+(s1)+(s2)+(s2)+(s1)$)
        -- ($(-2,-3)+(s2)+(s1)+(s2)+(s2)+(s1)+(s1)$) node [above right] {};
        
        \draw [thick,-latex, color=red!60] ($(-2,-3)+(s2)+(s1)+(s2)+(s2)+(s1)+(s1)$)
        -- ($(-2,-3)+(s2)+(s1)+(s2)+(s2)+(s1)+(s1)+(s1)$) node [above right] {};
        
             \draw [thick,-latex, color=red!60] ($(-2,-3)+(s2)+(s1)+(s2)+(s2)+(s1)+(s1)+(s1)$)
        -- ($(-2,-3)+(s2)+(s1)+(s2)+(s2)+(s1)+(s1)+(s1)+(s2)$) node [above right] {};
        
          \draw [thick,-latex, color=red!60] ($(-2,-3)+(s2)+(s1)+(s2)+(s2)+(s1)+(s1)+(s1)+(s2)$)
        -- ($(-2,-3)+(s2)+(s1)+(s2)+(s2)+(s1)+(s1)+(s1)+(s2)+(s1)$) node [above right] {};
        
           \draw [thick,-latex, color=red!60] ($(-2,-3)+(s2)+(s1)+(s2)+(s2)+(s1)+(s1)+(s1)+(s2)+(s1)$)
        -- ($(-2,-3)+(s2)+(s1)+(s2)+(s2)+(s1)+(s1)+(s1)+(s2)+(s1)+(s2)$) node [above right] {};
        
               \draw [thick,-latex, color=red!60] ($((-2,-3)+(s2)+(s1)+(s2)+(s2)+(s1)+(s1)+(s1)+(s2)+(s1)+(s2)$)
        -- ($(-2,-3)+(s2)+(s1)+(s2)+(s2)+(s1)+(s1)+(s1)+(s2)+(s1)+(s2)+(s2)$) node [above right] {};

               \draw [thick,-latex, color=red!60] ($(-2,-3)+(s2)+(s1)+(s2)+(s2)+(s1)+(s1)+(s1)+(s2)+(s1)+(s2)+(s2)$)
        -- ($(-2,-3)+(s2)+(s1)+(s2)+(s2)+(s1)+(s1)+(s1)+(s2)+(s1)+(s2)+(s2)+(s1)$) node [above right] {};
        
                \draw [thick,-latex, color=red!60] ($(-2,-3)+(s2)+(s1)+(s2)+(s2)+(s1)+(s1)+(s1)+(s2)+(s1)+(s2)+(s2)+(s1)$)
        -- ($(-2,-3)+(s2)+(s1)+(s2)+(s2)+(s1)+(s1)+(s1)+(s2)+(s1)+(s2)+(s2)+(s1)+(s2)$) node [above right] {};
        
                 \draw [thick,-latex, color=red!60] ($(-2,-3)+(s2)+(s1)+(s2)+(s2)+(s1)+(s1)+(s1)+(s2)+(s1)+(s2)+(s2)+(s1)+(s2)$)
        -- ($(-2,-3)+(s2)+(s1)+(s2)+(s2)+(s1)+(s1)+(s1)+(s2)+(s1)+(s2)+(s2)+(s1)+(s2)+(s1)$) node [above right] {};
        
        \draw [thick,-latex, color=red!60] ($(-2,-3)+(s2)+(s1)+(s2)+(s2)+(s1)+(s1)+(s1)+(s2)+(s1)+(s2)+(s2)+(s1)+(s2)+(s1)$)
        -- ($(-2,-3)+(s2)+(s1)+(s2)+(s2)+(s1)+(s1)+(s1)+(s2)+(s1)+(s2)+(s2)+(s1)+(s2)+(s1)+(s2)$) node [above right] {};
        
           \draw [thick,-latex, color=red!60] ($(-2,-3)+(s2)+(s1)+(s2)+(s2)+(s1)+(s1)+(s1)+(s2)+(s1)+(s2)+(s2)+(s1)+(s2)+(s1)+(s2)$)
        -- ($(-2,-3)+(s2)+(s1)+(s2)+(s2)+(s1)+(s1)+(s1)+(s2)+(s1)+(s2)+(s2)+(s1)+(s2)+(s1)+(s2)+(s2)$) node [above right] {};
        
            \draw [thick,-latex, color=red!60] ($((-2,-3)+(s2)+(s1)+(s2)+(s2)+(s1)+(s1)+(s1)+(s2)+(s1)+(s2)+(s2)+(s1)+(s2)+(s1)+(s2)+(s2)$)
        -- ($(-2,-3)+(s2)+(s1)+(s2)+(s2)+(s1)+(s1)+(s1)+(s2)+(s1)+(s2)+(s2)+(s1)+(s2)+(s1)+(s2)+(s2)+(s1)$) node [above right] {};
   
     \draw [thick,-latex, color=red!60] ($((-2,-3)+(s2)+(s1)+(s2)+(s2)+(s1)+(s1)+(s1)+(s2)+(s1)+(s2)+(s2)+(s1)+(s2)+(s1)+(s2)+(s2)+(s1)$)
        -- ($(-2,-3)+(s2)+(s1)+(s2)+(s2)+(s1)+(s1)+(s1)+(s2)+(s1)+(s2)+(s2)+(s1)+(s2)+(s1)+(s2)+(s2)+(s1)+(s1)$) node [above right] {};
        
         \draw [thick,-latex, color=red!60] ($(-2,-3)+(s2)+(s1)+(s2)+(s2)+(s1)+(s1)+(s1)+(s2)+(s1)+(s2)+(s2)+(s1)+(s2)+(s1)+(s2)+(s2)+(s1)+(s1)$)
        -- ($(-2,-3)+(s2)+(s1)+(s2)+(s2)+(s1)+(s1)+(s1)+(s2)+(s1)+(s2)+(s2)+(s1)+(s2)+(s1)+(s2)+(s2)+(s1)+(s1)+(s1)$) node [above right] {};

  \end{tikzpicture}
  \end{center}
  \caption{A lattice walk in $\mathbb{Z}^2$ that starts at $(0,0)$, consists of steps $(1,1)$ or $(1,-1)$, and crosses the $x$-axis, and its reflection along the line $y = -1$.}
 \end{figure}
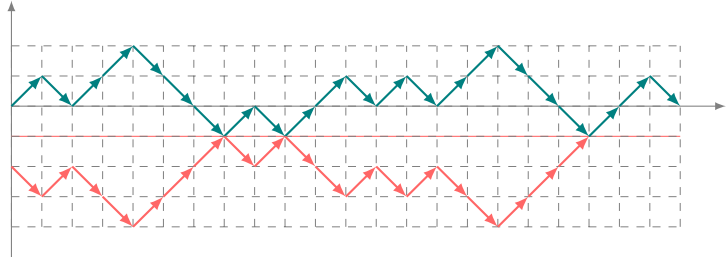

\bigskip

This combinatorial argument is referred to as the \emph{reflection principle}, and the variant of the kernel method which leads to the expression of $F(x;t)$ as the non-negative part of a rational function is therefore sometimes referred to as the algebraic variant of the reflection principle.
\bigskip

The reflection principle also allows one to directly determine the number $f(i;n)$ of walks in $\mathbb{N}$ that start at $0$, end in $i$ and have length $n$. It is 
\begin{equation*}
f(i;n) = \binom{n}{\frac{n-i}{2}} - \binom{n}{\frac{n-i-2}{2}}= \frac{n-i}{n+i+2}\binom{n}{\frac{n-i}{2}},
\end{equation*}
since the binomial coefficient $\binom{n}{\frac{n-i}{2}}$ counts the number of walks in $\mathbb{Z}$ that start at~$0$, end at $i$ and have length $n$.

\section{Combinatorial factorization}\label{sec:combFact}

In Section~\ref{sec:wienerHopf} we met a factorization of the kernel polynomial $K(x,t)$ to which we now give a combinatorial meaning. Let us slightly rewrite it in the form 
\begin{equation*}
K(x,t) = x (1-\bar{x}x_0) t x_1 (1- x x_1^{-1}),
\end{equation*}
and point out that $K(x,t)$ is the product of $x$ and series $s_-$, $s_0$ and $s_+$ such that
\begin{equation*}
s_-^{\pm 1} \in \mathbb{Q}[\bar{x}][[t]], \quad s_+^{\pm 1} \in \mathbb{Q}[x][[t]], \quad \text{and} \quad s_0^{\pm 1}\in\mathbb{Q}[[t]].
\end{equation*}
Furthermore, 
\begin{equation*}
[x^0] s_- = 1 =  [x^0] s_+.
\end{equation*}
The latter makes this decomposition unique. 

Any walk on $\mathbb{Z}$ that starts at $0$ and takes its steps from $\{-1,1\}$ can be written as the concatenation of three walks $w_-$, $w_0$ and $w_+$ that have the following property: $w_-$ is a walk that starts at $0$ and does not end at a positive integer, $w_0$ is a walk in $\mathbb{N}$ that starts and ends at $0$, and $w_+$ is a walk in $\mathbb{N}$ that starts at $0$. Requiring that $w_0$ is of maximal length makes this decomposition unique.

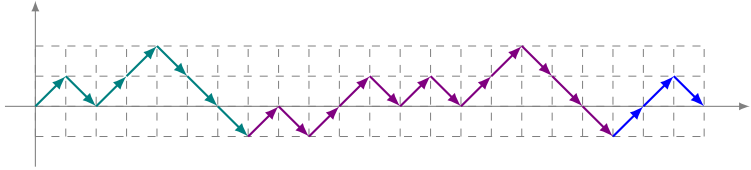
\begin{figure}[ht]
\hspace{-1.5cm}
  \centering
  \begin{center}
  \begin{tikzpicture}[scale=.4]
   
    \coordinate (Origin)   at (-2,-1);
    \coordinate (XAxisMin) at (-3,-1);
    \coordinate (XAxisMax) at (21.5,-1);
    \coordinate (YAxisMin) at (-2,-3);
    \coordinate (YAxisMax) at (-2,2.5);
    \draw [thin, gray,-latex] (XAxisMin) -- (XAxisMax);
    \draw [thin, gray,-latex] (YAxisMin) -- (YAxisMax);
 
    \coordinate (s1) at (1,1);
    \coordinate (s2) at (1,-1);

    \draw[style=help lines,dashed] (-2,-2) grid[step=1cm] (20,1);


    \draw [thick,-latex,teal] (Origin)
        -- ($(Origin)+(s1)$) ;

    \draw [thick,-latex,teal] ($(Origin)+(s1)$)
        -- ($(Origin)+(s1)+(s2)$) node [above right] {};

    \draw [thick,-latex,teal] ($(Origin)+(s1)+(s2)$)
        -- ($(Origin)+(s1)+(s2)+(s1)$) node [above right] {};

    \draw [thick,-latex,teal] ($(Origin)+(s1)+(s2)+(s1)$)
        -- ($(Origin)+(s1)+(s2)+(s1)+(s1)$) node [above right] {};

    \draw [thick,-latex,teal] ($(Origin)+(s1)+(s2)+(s1)+(s1)$)
        -- ($(Origin)+(s1)+(s2)+(s1)+(s1)+(s2)$) node [above right] {};

    \draw [thick,-latex,teal] ($(Origin)+(s1)+(s2)+(s1)+(s1)+(s2)$)
        -- ($(Origin)+(s1)+(s2)+(s1)+(s1)+(s2)+(s2)$) node [above right] {};
        
        \draw [thick,-latex,teal] ($(Origin)+(s1)+(s2)+(s1)+(s1)+(s2)+(s2)$)
        -- ($(Origin)+(s1)+(s2)+(s1)+(s1)+(s2)+(s2)+(s2)$) node [above right] {};
        
           \draw [thick,-latex,violet] ($(Origin)+(s1)+(s2)+(s1)+(s1)+(s2)+(s2)+(s2)$)
        -- ($(Origin)+(s1)+(s2)+(s1)+(s1)+(s2)+(s2)+(s2)+(s1)$) node [above right] {};
        
           \draw [thick,-latex,violet] ($(Origin)+(s1)+(s2)+(s1)+(s1)+(s2)+(s2)+(s2)+(s1)$)
        -- ($(Origin)+(s1)+(s2)+(s1)+(s1)+(s2)+(s2)+(s2)+(s1)+(s2)$) node [above right] {};
        
           \draw [thick,-latex,violet] ($(Origin)+(s1)+(s2)+(s1)+(s1)+(s2)+(s2)+(s2)+(s1)+(s2)$)
        -- ($(Origin)+(s1)+(s2)+(s1)+(s1)+(s2)+(s2)+(s2)+(s1)+(s2)+(s1)$) node [above right] {};
        
               \draw [thick,-latex,violet] ($(Origin)+(s1)+(s2)+(s1)+(s1)+(s2)+(s2)+(s2)+(s1)+(s2)+(s1)$)
        -- ($(Origin)+(s1)+(s2)+(s1)+(s1)+(s2)+(s2)+(s2)+(s1)+(s2)+(s1)+(s1)$) node [above right] {};

               \draw [thick,-latex,violet] ($(Origin)+(s1)+(s2)+(s1)+(s1)+(s2)+(s2)+(s2)+(s1)+(s2)+(s1)+(s1)$)
        -- ($(Origin)+(s1)+(s2)+(s1)+(s1)+(s2)+(s2)+(s2)+(s1)+(s2)+(s1)+(s1)+(s2)$) node [above right] {};
        
                \draw [thick,-latex,violet] ($(Origin)+(s1)+(s2)+(s1)+(s1)+(s2)+(s2)+(s2)+(s1)+(s2)+(s1)+(s1)+(s2)$)
        -- ($(Origin)+(s1)+(s2)+(s1)+(s1)+(s2)+(s2)+(s2)+(s1)+(s2)+(s1)+(s1)+(s2)+(s1)$) node [above right] {};
        
                 \draw [thick,-latex,violet] ($(Origin)+(s1)+(s2)+(s1)+(s1)+(s2)+(s2)+(s2)+(s1)+(s2)+(s1)+(s1)+(s2)+(s1)$)
        -- ($(Origin)+(s1)+(s2)+(s1)+(s1)+(s2)+(s2)+(s2)+(s1)+(s2)+(s1)+(s1)+(s2)+(s1)+(s2)$) node [above right] {};
        
        \draw [thick,-latex,violet] ($(Origin)+(s1)+(s2)+(s1)+(s1)+(s2)+(s2)+(s2)+(s1)+(s2)+(s1)+(s1)+(s2)+(s1)+(s2)$)
        -- ($(Origin)+(s1)+(s2)+(s1)+(s1)+(s2)+(s2)+(s2)+(s1)+(s2)+(s1)+(s1)+(s2)+(s1)+(s2)+(s1)$) node [above right] {};
        
           \draw [thick,-latex,violet] ($(Origin)+(s1)+(s2)+(s1)+(s1)+(s2)+(s2)+(s2)+(s1)+(s2)+(s1)+(s1)+(s2)+(s1)+(s2)+(s1)$)
        -- ($(Origin)+(s1)+(s2)+(s1)+(s1)+(s2)+(s2)+(s2)+(s1)+(s2)+(s1)+(s1)+(s2)+(s1)+(s2)+(s1)+(s1)$) node [above right] {};
        
            \draw [thick,-latex,violet] ($(Origin)+(s1)+(s2)+(s1)+(s1)+(s2)+(s2)+(s2)+(s1)+(s2)+(s1)+(s1)+(s2)+(s1)+(s2)+(s1)+(s1)$)
        -- ($(Origin)+(s1)+(s2)+(s1)+(s1)+(s2)+(s2)+(s2)+(s1)+(s2)+(s1)+(s1)+(s2)+(s1)+(s2)+(s1)+(s1)+(s2)$) node [above right] {};
   
     \draw [thick,-latex,violet] ($(Origin)+(s1)+(s2)+(s1)+(s1)+(s2)+(s2)+(s2)+(s1)+(s2)+(s1)+(s1)+(s2)+(s1)+(s2)+(s1)+(s1)+(s2)$)
        -- ($(Origin)+(s1)+(s2)+(s1)+(s1)+(s2)+(s2)+(s2)+(s1)+(s2)+(s1)+(s1)+(s2)+(s1)+(s2)+(s1)+(s1)+(s2)+(s2)$) node [above right] {};
        
         \draw [thick,-latex,violet] ($(Origin)+(s1)+(s2)+(s1)+(s1)+(s2)+(s2)+(s2)+(s1)+(s2)+(s1)+(s1)+(s2)+(s1)+(s2)+(s1)+(s1)+(s2)+(s2)$)
        -- ($(Origin)+(s1)+(s2)+(s1)+(s1)+(s2)+(s2)+(s2)+(s1)+(s2)+(s1)+(s1)+(s2)+(s1)+(s2)+(s1)+(s1)+(s2)+(s2)+(s2)$) node [above right] {};
        
             \draw [thick,-latex,blue] ($(Origin)+(s1)+(s2)+(s1)+(s1)+(s2)+(s2)+(s2)+(s1)+(s2)+(s1)+(s1)+(s2)+(s1)+(s2)+(s1)+(s1)+(s2)+(s2)+(s2)$)
        -- ($(Origin)+(s1)+(s2)+(s1)+(s1)+(s2)+(s2)+(s2)+(s1)+(s2)+(s1)+(s1)+(s2)+(s1)+(s2)+(s1)+(s1)+(s2)+(s2)+(s2)+(s1)$) node [above right] {};
        
         \draw [thick,-latex,blue] ($(Origin)+(s1)+(s2)+(s1)+(s1)+(s2)+(s2)+(s2)+(s1)+(s2)+(s1)+(s1)+(s2)+(s1)+(s2)+(s1)+(s1)+(s2)+(s2)+(s2)+(s1)$)
        -- ($(Origin)+(s1)+(s2)+(s1)+(s1)+(s2)+(s2)+(s2)+(s1)+(s2)+(s1)+(s1)+(s2)+(s1)+(s2)+(s1)+(s1)+(s2)+(s2)+(s2)+(s1)+(s1)$) node [above right] {};
        
         \draw [thick,-latex,blue] ($(Origin)+(s1)+(s2)+(s1)+(s1)+(s2)+(s2)+(s2)+(s1)+(s2)+(s1)+(s1)+(s2)+(s1)+(s2)+(s1)+(s1)+(s2)+(s2)+(s2)+(s1)+(s1)$)
        -- ($(Origin)+(s1)+(s2)+(s1)+(s1)+(s2)+(s2)+(s2)+(s1)+(s2)+(s1)+(s1)+(s2)+(s1)+(s2)+(s1)+(s1)+(s2)+(s2)+(s2)+(s1)+(s1)+(s2)$) node [above right] {};

  \end{tikzpicture}
  \end{center}
  \caption{A lattice walk and its combinatorial factorization.}
 \end{figure}

This translates into a factorization of the generating function of walks on $\mathbb{Z}$
\begin{equation*}
\frac{1}{1- t (x + \bar{x})} = F_-(x;t) F(0;t) F_+(x;t),
\end{equation*}
where again 
\begin{equation*}
F_-^{\pm 1}\in\mathbb{Q}[\bar{x}][[t]], \quad F_+^{\pm 1}\in\mathbb{Q}[\bar{x}][[t]] \quad \text{and} \quad F_0\in\mathbb{Q}[[t]]
\end{equation*}
and 
\begin{equation*}
[x^0] F_- = 1 = [x^0] F_+.
\end{equation*}
It now follows that
\begin{equation*}
F_-(x;t) = \frac{1}{1 - \bar{x}x_0}, \quad F(0;t) = \frac{1}{tx_1} \quad \text{and} \quad F_+(x;t) = \frac{1}{1 - xx_1^{-1}}.
\end{equation*}
Furthermore, since a walk $w$ on $\mathbb{Z}$ is a walk on $\mathbb{N}$ if and only if $w_-$ is of length zero, we find that 
\begin{equation*}
F(x;t) = F(0;t) F_+(x;t) = \frac{1}{t(x_1 - x)}.
\end{equation*}


\section{Cycle lemma}\label{sec:cycleLemma}
We have seen how Lagrange inversion implies that 
\begin{equation*}
f(0;n) = \frac{1}{n+1} [t^{-1}] (t + t^{-1})^{n+1}.
\end{equation*}
Since~$[t^{-1}] (t + t^{-1})^{n+1}$ is the number of walks in $\mathbb{Z}$ which start at $0$, consist of $n+1$ steps all of which are taken from~$\{1,-1\}$ and end in $-1$, this identity begs for a combinatorial explanation. The cycle lemma provides such an explanation. There are different versions of the cycle lemma, the one we present here is due to Spitzer and also known as Spitzer's lemma. For details and proofs we refer to~\cite{Krattenthaler} from which the following lemma and proposition are taken.
\begin{Lemma}
Let $a_1,\dots,a_N$ be a sequence of real numbers such that $a_1+\dots+a_N = 0$ and no partial sum of consecutive $a_i$'s read cyclically vanishes. Then there is a unique cyclic permutation $a_i,a_{i+1},\dots,a_N,a_1,\dots,a_{i-1}$ with the property that for $j=1,\dots,N$ the sum of its first $j$ terms is non-negative.
\end{Lemma}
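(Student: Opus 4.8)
The plan is to translate the combinatorial statement about cyclic rotations into an elementary statement about the \emph{partial sums} of the sequence, where both existence and uniqueness become transparent. Define $S_0 := 0$ and $S_k := a_1 + \cdots + a_k$ for $1 \le k \le N$, so that $S_N = 0$ by hypothesis, and extend the definition to all integers by $S_{k+N} := S_k$; this extension is consistent precisely because the full sum vanishes. The cyclic permutation beginning at index $i$, namely $a_i, a_{i+1}, \dots, a_{i-1}$ with indices read modulo $N$, then has $j$-th partial sum equal to $S_{i+j-1} - S_{i-1}$ for $j = 1, \dots, N$, as one sees by telescoping against the periodically extended $S$.

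With this dictionary in hand, I would observe that the rotation starting at $i$ has all $N$ of its partial sums non-negative if and only if $S_{i-1} \le S_{i+j-1}$ for every $j = 1, \dots, N$. As $j$ ranges over $1, \dots, N$ the index $i+j-1$ runs through a complete residue system modulo $N$, so this condition says exactly that $S_{i-1}$ attains the minimum among $S_0, S_1, \dots, S_{N-1}$ (the constraint coming from $j = N$ is the tautology $S_{i-1} \le S_{i-1}$, reflecting that the length-$N$ partial sum is the vanishing total). Thus the theorem reduces to showing that this minimum is attained at exactly one index.

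To finish I would show that the hypothesis that no cyclically consecutive \emph{proper} block sums to zero is equivalent to the values $S_0, S_1, \dots, S_{N-1}$ being pairwise distinct: the sum of a nonempty proper block of consecutive terms has the form $S_q - S_p$, so such a sum vanishes exactly when two of these partial sums coincide. Given distinctness, the minimum over $S_0, \dots, S_{N-1}$ is attained at a unique index $m$, and taking $i = m+1$ yields the desired rotation. For uniqueness, any other starting index $i'$ has $S_{i'-1} > S_m$, so the partial sum of that rotation which reaches the minimizing index equals $S_m - S_{i'-1} < 0$, ruling it out. This delivers both existence and uniqueness at once.

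The bookkeeping I expect to be the main obstacle is the cyclic indexing: I must verify carefully that the periodic extension of $S$ is well defined (this is where $a_1 + \cdots + a_N = 0$ is used) and that the non-negativity of every partial sum of a rotation really is equivalent to $S_{i-1}$ being the global minimum over one period. I must also be precise that the hypothesis concerns proper blocks only, since the full block sums to zero and is excluded; that is exactly what makes the $S_0, \dots, S_{N-1}$ distinct and hence forces a \emph{unique} minimizer. Once the dictionary and the distinctness equivalence are pinned down, the remaining arguments are immediate.
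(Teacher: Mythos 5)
Your argument is correct and complete: the dictionary between rotations and the periodically extended partial sums $S_k$, the equivalence of the no-vanishing-block hypothesis with the pairwise distinctness of $S_0,\dots,S_{N-1}$, and the identification of the valid starting index as the successor of the unique minimizer together give both existence and uniqueness. Note that the paper itself does not prove this lemma --- it explicitly defers to Krattenthaler's survey, from which the statement is taken --- so there is no in-paper proof to compare against; your proof is the standard partial-sum argument one finds in that reference, and it is sound as written.
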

A consequence of the cycle lemma is the following proposition.
\begin{Proposition}\label{prop:applyCycle}
Let $r$ and $s$ be two positive integers which are relative prime. Then the number of paths in $\mathbb{N}^2$ that start at $(0,0)$, consist of steps $(1,0)$ and $(0,1)$, end at $(r,s)$ and stay weakly below the line $ry = sx$ is $\frac{1}{r+s}\binom{r+s}{s}$.
\end{Proposition}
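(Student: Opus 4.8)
The plan is to apply the cycle lemma to an encoding of these paths as sequences of signed weights whose partial sums track the signed distance to the line $ry=sx$. First I would associate to each path from $(0,0)$ to $(r,s)$ the sequence $a_1,\dots,a_{r+s}$ in which a step $(1,0)$ contributes $a_i=s$ and a step $(0,1)$ contributes $a_i=-r$. If $(x_j,y_j)$ denotes the position after $j$ steps, then the $j$-th partial sum equals $s x_j - r y_j$, so the total sum is $sr-rs=0$, and the path stays weakly below the line $ry=sx$ (that is, $r y_j \le s x_j$ for all $j$) if and only if every partial sum $a_1+\dots+a_j$ is nonnegative. Thus counting the desired paths is the same as counting, among all $\binom{r+s}{s}$ arrangements of $r$ weights equal to $s$ and $s$ weights equal to $-r$, those whose partial sums are all nonnegative.

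Next I would let the cyclic group $\mathbb{Z}/(r+s)$ act by rotation on these $\binom{r+s}{s}$ sequences and verify the hypotheses of the cycle lemma. A contiguous block read cyclically, containing $p$ steps of type $(1,0)$ and $q$ of type $(0,1)$, has sum $ps-qr$; and $ps=qr$ with $0\le p\le r$ and $0\le q\le s$ forces, via $\gcd(r,s)=1$, that $r\mid p$ and hence $(p,q)=(0,0)$ or $(p,q)=(r,s)$. Therefore no proper nonempty cyclic partial sum of any such sequence vanishes, so the cycle lemma applies to every orbit. The same computation shows the action is free: a nontrivial rotational symmetry of period $d\mid(r+s)$ with $d<r+s$ would split the sequence into $(r+s)/d$ identical blocks, making $(r+s)/d$ a common divisor of $r$ and $s$ greater than $1$, which is impossible. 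Hence every orbit has exactly $r+s$ elements.

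Finally I would combine the two facts. By the cycle lemma each orbit contains exactly one sequence with all partial sums nonnegative, so the number of good sequences equals the number of orbits; since every orbit is free of size $r+s$, this number is $\binom{r+s}{s}/(r+s) = \frac{1}{r+s}\binom{r+s}{s}$, which is the claimed count of paths staying weakly below the line.

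The main obstacle is the verification that uses relative primality, and it is worth isolating because $\gcd(r,s)=1$ is doing double duty: it simultaneously guarantees the nonvanishing of proper cyclic partial sums (so that the cycle lemma is applicable) and the freeness of the rotation action (so that dividing $\binom{r+s}{s}$ by $r+s$ is legitimate). Both reductions collapse to the single observation that $ps=qr$ has no solution with $0\le p\le r$ and $0\le q\le s$ other than the empty and the full block, and this is the one place where I would spell out the argument carefully.
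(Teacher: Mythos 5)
Your proposal is correct and follows essentially the same route as the paper's proof: encode steps $(1,0)$ and $(0,1)$ as $s$ and $-r$, let the cyclic group act on the $\binom{r+s}{s}$ step sequences, and invoke the cycle lemma to pick out one good representative per orbit. You merely spell out two points the paper leaves implicit --- why no proper cyclic partial sum vanishes and why every orbit has full size $r+s$ --- both of which reduce, as you note, to the same consequence of $\gcd(r,s)=1$.
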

\begin{proof}
Let $\mathfrak{P}$ be the set of lattice walks that start at $(0,0)$ and end at $(r,s)$ and let~$\mathfrak{G}$ be the group generated by the permutation $(1,2,\dots, r+s)$. Clearly, $\mathfrak{G}$ acts on~$\mathfrak{P}$ by permuting the steps of a walk cyclically. Since $\mathfrak{P}$ has cardinality $\binom{r+s}{s}$ and since each orbit has cardinality $r+s$ the set $\mathfrak{P}$ partitions into $\frac{1}{r+s}\binom{r+s}{s}$ orbits. We finish the proof of the proposition by observing that each orbit contains exactly one walk that lies weakly below the line $ry=sx$. To see this note that we can encode a walk by a sequence of numbers by replacing a step $(1,0)$ by $s$ and a step~$(0,1)$ by~$-r$. Since $r$ and $s$ are relatively prime the cycle lemma applies.
\end{proof}

Proposition~\ref{prop:applyCycle} provides a combinatorial explanation for equation~\eqref{eq:lagrange1} as there is a bijection between walks of length $2n$ in $\mathbb{N}$ that start at~$0$, take their steps from $\{-1,1\}$ and end at $0$ and walks of length $2n+1$ in $\mathbb{N}^2$ that start at $(0,0)$, consist of steps $(1,0)$ or $(0,1)$, end at $(r,s)=(n,n+1)$ and stay weakly below the line $ry = sx$.

\begin{figure}[ht]
\hspace{-1.5cm}
  \centering
  \begin{center}
  \begin{tikzpicture}[scale=.4]
   
    \coordinate (Origin)   at (-2,-1);
    \coordinate (XAxisMin) at (-3,-1);
    \coordinate (XAxisMax) at (21.5,-1);
    \coordinate (YAxisMin) at (-2,-2);
    \coordinate (YAxisMax) at (-2,3.5);
    \draw [thin, gray,-latex] (XAxisMin) -- (XAxisMax);
    \draw [thin, gray,-latex] (YAxisMin) -- (YAxisMax);
 
    \coordinate (s1) at (1,1);
    \coordinate (s2) at (1,-1);

    \draw[style=help lines,dashed] (-2,-1) grid[step=1cm] (20,2);


%
%
%
%
%
%
        
           \draw [thick,-latex,violet] ($(Origin)$)
        -- ($(Origin)+(s1)$) node [above right] {};
        
         \draw [thick,-latex,violet] ($(Origin)+(s1)$)
        -- ($(Origin)+(s1)+(s2)$) node [above right] {};
        
           \draw [thick,-latex,violet] ($(Origin)+(s1)+(s2)$)
        -- ($(Origin)+(s1)+(s2)+(s1)$) node [above right] {};
        
               \draw [thick,-latex,violet] ($(Origin)+(s1)+(s2)+(s1)$)
        -- ($(Origin)+(s1)+(s2)+(s1)+(s1)$) node [above right] {};

               \draw [thick,-latex,violet] ($(Origin)+(s1)+(s2)+(s1)+(s1)$)
        -- ($(Origin)+(s1)+(s2)+(s1)+(s1)+(s2)$) node [above right] {};
        
                \draw [thick,-latex,violet] ($(Origin)+(s1)+(s2)+(s1)+(s1)+(s2)$)
        -- ($(Origin)+(s1)+(s2)+(s1)+(s1)+(s2)+(s1)$) node [above right] {};
        
                 \draw [thick,-latex,violet] ($(Origin)+(s1)+(s2)+(s1)+(s1)+(s2)+(s1)$)
        -- ($(Origin)+(s1)+(s2)+(s1)+(s1)+(s2)+(s1)+(s2)$) node [above right] {};
        
        \draw [thick,-latex,violet] ($(Origin)+(s1)+(s2)+(s1)+(s1)+(s2)+(s1)+(s2)$)
        -- ($(Origin)+(s1)+(s2)+(s1)+(s1)+(s2)+(s1)+(s2)+(s1)$) node [above right] {};
        
           \draw [thick,-latex,violet] ($(Origin)+(s1)+(s2)+(s1)+(s1)+(s2)+(s1)+(s2)+(s1)$)
        -- ($(Origin)+(s1)+(s2)+(s1)+(s1)+(s2)+(s1)+(s2)+(s1)+(s1)$) node [above right] {};
        
            \draw [thick,-latex,violet] ($(Origin)+(s1)+(s2)+(s1)+(s1)+(s2)+(s1)+(s2)+(s1)+(s1)$)
        -- ($(Origin)+(s1)+(s2)+(s1)+(s1)+(s2)+(s1)+(s2)+(s1)+(s1)+(s2)$) node [above right] {};
   
     \draw [thick,-latex,violet] ($(Origin)+(s1)+(s2)+(s1)+(s1)+(s2)+(s1)+(s2)+(s1)+(s1)+(s2)$)
        -- ($(Origin)+(s1)+(s2)+(s1)+(s1)+(s2)+(s1)+(s2)+(s1)+(s1)+(s2)+(s2)$) node [above right] {};
        
         \draw [thick,-latex,violet] ($(Origin)+(s1)+(s2)+(s1)+(s1)+(s2)+(s1)+(s2)+(s1)+(s1)+(s2)+(s2)$)
        -- ($(Origin)+(s1)+(s2)+(s1)+(s1)+(s2)+(s1)+(s2)+(s1)+(s1)+(s2)+(s2)+(s2)$) node [above right] {};
        
             \draw [thick,-latex,blue] ($(Origin)+(s1)+(s2)+(s1)+(s1)+(s2)+(s1)+(s2)+(s1)+(s1)+(s2)+(s2)+(s2)$)
        -- ($(Origin)+(s1)+(s2)+(s1)+(s1)+(s2)+(s1)+(s2)+(s1)+(s1)+(s2)+(s2)+(s2)+(s1)$) node [above right] {};
        
         \draw [thick,-latex,blue] ($(Origin)+(s1)+(s2)+(s1)+(s1)+(s2)+(s1)+(s2)+(s1)+(s1)+(s2)+(s2)+(s2)+(s1)$)
        -- ($(Origin)+(s1)+(s2)+(s1)+(s1)+(s2)+(s1)+(s2)+(s1)+(s1)+(s2)+(s2)+(s2)+(s1)+(s1)$) node [above right] {};
        
         \draw [thick,-latex,blue] ($(Origin)+(s1)+(s2)+(s1)+(s1)+(s2)+(s1)+(s2)+(s1)+(s1)+(s2)+(s2)+(s2)+(s1)+(s1)$)
        -- ($(Origin)+(s1)+(s2)+(s1)+(s1)+(s2)+(s1)+(s2)+(s1)+(s1)+(s2)+(s2)+(s2)+(s1)+(s1)+(s2)$) node [above right] {};

 \draw [thick,-latex,teal] ($(Origin)+(s1)+(s2)+(s1)+(s1)+(s2)+(s1)+(s2)+(s1)+(s1)+(s2)+(s2)+(s2)+(s1)+(s1)+(s2)$)
        -- ($(Origin)+(s1)+(s2)+(s1)+(s1)+(s2)+(s1)+(s2)+(s1)+(s1)+(s2)+(s2)+(s2)+(s1)+(s1)+(s2)+(s1)$) node [above right] {};
        
         \draw [thick,-latex,teal] ($(Origin)+(s1)+(s2)+(s1)+(s1)+(s2)+(s1)+(s2)+(s1)+(s1)+(s2)+(s2)+(s2)+(s1)+(s1)+(s2)+(s1)$)
        -- ($(Origin)+(s1)+(s2)+(s1)+(s1)+(s2)+(s1)+(s2)+(s1)+(s1)+(s2)+(s2)+(s2)+(s1)+(s1)+(s2)+(s1)+(s2)$) node [above right] {};

    \draw [thick,-latex,teal] ($(Origin)+(s1)+(s2)+(s1)+(s1)+(s2)+(s1)+(s2)+(s1)+(s1)+(s2)+(s2)+(s2)+(s1)+(s1)+(s2)+(s1)+(s2)$)
        -- ($(Origin)+(s1)+(s2)+(s1)+(s1)+(s2)+(s1)+(s2)+(s1)+(s1)+(s2)+(s2)+(s2)+(s1)+(s1)+(s2)+(s1)+(s2)+(s1)$) node [above right] {};
        
          \draw [thick,-latex,teal] ($(Origin)+(s1)+(s2)+(s1)+(s1)+(s2)+(s1)+(s2)+(s1)+(s1)+(s2)+(s2)+(s2)+(s1)+(s1)+(s2)+(s1)+(s2)+(s1)$)
        -- ($(Origin)+(s1)+(s2)+(s1)+(s1)+(s2)+(s1)+(s2)+(s1)+(s1)+(s2)+(s2)+(s2)+(s1)+(s1)+(s2)+(s1)+(s2)+(s1)+(s1)$) node [above right] {};
        
            \draw [thick,-latex,teal] ($(Origin)+(s1)+(s2)+(s1)+(s1)+(s2)+(s1)+(s2)+(s1)+(s1)+(s2)+(s2)+(s2)+(s1)+(s1)+(s2)+(s1)+(s2)+(s1)+(s1)$)
        -- ($(Origin)+(s1)+(s2)+(s1)+(s1)+(s2)+(s1)+(s2)+(s1)+(s1)+(s2)+(s2)+(s2)+(s1)+(s1)+(s2)+(s1)+(s2)+(s1)+(s1)+(s2)$) node [above right] {};

    \draw [thick,-latex,teal] ($(Origin)+(s1)+(s2)+(s1)+(s1)+(s2)+(s1)+(s2)+(s1)+(s1)+(s2)+(s2)+(s2)+(s1)+(s1)+(s2)+(s1)+(s2)+(s1)+(s1)+(s2)$)
        -- ($(Origin)+(s1)+(s2)+(s1)+(s1)+(s2)+(s1)+(s2)+(s1)+(s1)+(s2)+(s2)+(s2)+(s1)+(s1)+(s2)+(s1)+(s2)+(s1)+(s1)+(s2)+(s2)$) node [above right] {};
        
         \draw [thick,-latex,teal] ($(Origin)+(s1)+(s2)+(s1)+(s1)+(s2)+(s1)+(s2)+(s1)+(s1)+(s2)+(s2)+(s2)+(s1)+(s1)+(s2)+(s1)+(s2)+(s1)+(s1)+(s2) +(s2)$)
        -- ($(Origin)+(s1)+(s2)+(s1)+(s1)+(s2)+(s1)+(s2)+(s1)+(s1)+(s2)+(s2)+(s2)+(s1)+(s1)+(s2)+(s1)+(s2)+(s1)+(s1)+(s2)+(s2) + (s2)$) node [above right] {};

  \end{tikzpicture}
  \end{center}
  \caption{A cyclic permutation of the lattice walk of Figure~5 not violating the boundary condition.}
 \end{figure}
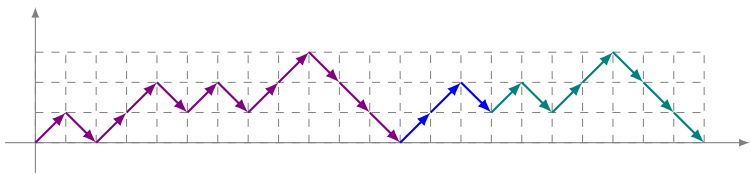

%

\section{Guess and prove}\label{sec:guessProve}

Equation~\eqref{eq:returns} shows that the generating function $F(0;t)$ of excursions is algebraic and its minimal polynomial over $Q(t)$ is $P_0(t,Y) = Y^2 - Y/t^2 + 1/t^2$. Since~$F(x;t)$ is a rational function in $F(0;t)$ over $\mathbb{Q}(x,t)$ closure properties of algebraic functions~\cite{theConcreteTetrahedron} imply that $F(x;t)$ is algebraic too. These closure properties are effective, so we could use them to determine the minimal polynomial of $F(x;t)$ given the minimal polynomial of $F(0;t)$. Although in this particular example this would be particularly simple we will proceed differently: we will \emph{guess} the minimal polynomial of $F(x;t)$ and give an argument for its correctness which is independent of the discussion from before. By \emph{guessing} we mean to make an ansatz
\begin{equation*}
P(x,t,Y) = \sum_{i,j,k = 0}^d p_{ijk} x^i t^j Y^k
\end{equation*}
with undetermined coefficients $p_{ijk}$ for (the numerator of) the minimal polynomial of $F(x;t)$. Computation of a truncation of $F(x;t)$ up to some order, its insertion into $P(x,t,Y)$ and setting to zero the coefficients of the monomials in $x$ and $t$ results in a linear system for the $p_{ijk}$'s. In general, there are two situations that can arise: either the linear system has a non-trivial solution which gives rise to an annihilating polynomial for the truncation of $F(x;t)$, or its only solution is zero. If its only solution is zero, then either because the ansatz was not appropriate, that is, the $d$ was too small, or because $F(x;t)$ is not algebraic. For this example we already know that the generating function $F(x;t)$ is algebraic. Indeed, computing $F(x;t)$ up to order $8$ with respect to $t$ and making an ansatz for $P(x,t,Y)$ with $d=2$ we find that the solution space for its coefficients has dimension $1$ and one of its solutions corresponds to
\begin{equation*}
P(x,t,Y) = 1 - (1-2xt) Y - xt(1-t(x + \bar{x})) Y^2.  
\end{equation*}
Of course, having a non-trivial guess for the minimal polynomial does in general not prove the algebraicity of $F(x;t)$. It only means that we have found a polynomial that annihilates $F(x;t)$ up to a given order.
But having a guess for the minimal polynomial one can increase confidence by checking whether it also annihilates truncations of $F(x;t)$ at higher orders. Having a reasonable candidate for the minimal polynomial, the next step is to verify its correctness. For this particular example it is again particularly easy. Since $P(x,t,Y)$ has degree $2$ with respect to $Y$, we can simply compute its roots and check whether one of them is an element of $\mathbb{Q}[x][[t]]$ that solves equation~\eqref{eq:kEq}. If there is one, then $P(x,t,Y)$ is indeed the minimal polynomial of $F(x;t)$, since $F(x;t)$ is the only solution of the kernel equation in $\mathbb{Q}[x][[t]]$. 
\bigskip

The general situation is more complicated though the underlying principle is the same. The difficulty comes from the fact that the roots of the minimal polynomial may not be given explicitly. But this is actually not necessary. The existence of a root $F_{\text{cand}}(x;t)$ in~$\mathbb{Q}[x][[t]]$ can be derived from the Newton-Puiseux algorithm~\cite{buchacher2025newton}. 
To prove that $P(x,t,Y)$ is the minimal polynomial of $F(x;t)$ it is sufficient to show that $F(x;t)$ equals $F_{\text{cand}}(x;t)$. Using closure properties of algebraic functions and Gr{\"o}bner bases (or resultants) we find that
\begin{equation*}
A(Y) = Y (1 - 4 t^2 - t^2 Y^2)
\end{equation*}
is an annihilating polynomial for 
\begin{equation}\label{exp:zero}
x(1-t (x + \bar{x})) F_{\text{cand}}(x;t) - x + t F_{\text{cand}}(0;t).
\end{equation}
By computing a truncation of $F_{\text{cand}}(x;t)$ up to some order and plugging the corresponding truncation of expression~\eqref{exp:zero} into the second factor of $A(Y)$ we find that it is not a root of $1 - 4 t^2 - t^2 Y^2$. Since it is a root of $A(Y)$, it is necessarily a root of $Y$, and therefore identically zero. Consequently,~$F_{\text{cand}}(x;t)$ is a solution of the kernel equation and hence equal to~$F(x;t)$ since there is only one solution in~$\mathbb{Q}[x][[t]]$, and~$P(x,t,Y)$ is indeed the minimal polynomial of~$F(x;t)$. 
\bigskip

We now provide some details on the computation of $A(Y)$. By construction $P(x,t,Y)$ is the minimal polynomial of $F_{\text{cand}}(x;t)$, an annihilating polynomial for $F_{\text{cand}}(0;t)$ is $P(0,t,Y)$, and the minimal polynomial of a polynomial $p(x,t)\in\mathbb{Q}[x,t]$ is $p(x,t) - Y$. So we know the annihilating polynomial for the building blocks expression~\eqref{exp:zero} is made of. To find an annihilating polynomial for the full expression we need to effectively perform the closure properties plus and times. This is easily done by using Gr{\"o}bner bases and noting that if $p_1(Y)$ and $p_2(Y)$ are annihilating polynomials for~$y_1$ and~$y_2$ then the generators of the elimination ideals
\begin{equation*}
\langle p_1(y_1), p_2(y_2), y_1 + y_2 - Y \rangle \cap \mathbb{Q}[Y] \quad \text{and} 
\quad \langle p_1(y_1), p_2(y_2), y_1y_2 - Y \rangle \cap \mathbb{Q}[Y]
\end{equation*} 
are annihilating polynomials for $y_1 + y_2$ and $y_1y_2$, respectively. Note that $y_1$ and $y_2$ are treated as variables. 
\bigskip

Having determined the minimal polynomial of $F(x;t)$ it is now natural to ask why it is advantageous to have it at all. In general there is no closed form for its roots and hence for the generating function. But the minimal polynomial together with the first terms of $F(x;t)$, sufficiently many to distinguish it from the other roots, allows one to represent $F(x;t)$ by a finite amount of data. The class of algebraic functions has many closure properties and we have seen before how some of them can be performed effectively. Apart from doing exact computations these representations also admit many other useful operations such as the fast computation of their coefficients, the derivation of closed form expressions, the computation of their asymptotics, and numerical evaluation~\cite{Salvy}. Before we elaborate on the computation of coefficients we introduce the notion of D-finiteness.

\section{Differential algebra}\label{sec:diffAlg}
\begin{Definition}\label{def:dFinite}
A series $f\in\mathbb{Q}[[t_1,\dots,t_n]]$ is called \emph{D-finite}, if its derivatives with respect to each of of its variables form a finite-dimensional vector space over $\mathbb{Q}(t_1,\dots,t_n)$, that is, if for each $t\in\{t_1,\dots,t_n\}$ there are $d\in\mathbb{N}$ and $p_0,\dots,p_d \in\mathbb{Q}[t_1,\dots,t_n]$, not all zero, such that 
\begin{equation*}
p_0 \frac{\partial^d}{\partial t^d}f + p_1 \frac{\partial^{d-1}}{\partial t^{d-1}}f + \dots + p_d f = 0.  
\end{equation*}
\end{Definition}
It is straight-forward to see that a series $f(t) = \sum_{n=0}^\infty f_n t^n$ is D-finite if and only if its coefficient sequence $(f_n)$ satisfies a linear recurrence with polynomial coefficients, i.e. if and only if there are $r\in\mathbb{N}$ and polynomials $q_0,\dots,q_r\in\mathbb{Q}[t]$, not all zero, such that 
\begin{equation*}
q_0(n) f_{n+r} + q_1(n) f_{n+r-1} + \dots + q_r(n) f_n = 0 \quad \text{for all } n\in\mathbb{N}.
\end{equation*}
Consequently, a D-finite function can be encoded by a finite amount of data: a recurrence relation for its coefficient sequence and finitely many of its initial terms. 
\bigskip

Any algebraic function is D-finite, and we will now explain how to determine a differential equation for $F(x;t)$, and see how it translates into a recurrence relation for its coefficient sequence. There is more than one way of doing so. For instance, we could use that $F(x;t)$ equals~$\frac{1-\bar{x} x_0}{1-t(x+\bar{x})}$, find differential equations for the building blocks of the expression and then use closure properties of D-finite functions to construct a differential equation for $F(x;t)$ from them. We could also use that $F(x;t)$ is the non-negative part of a rational function and construct a differential equation for $F(x;t)$ using linear algebra as suggested in~\cite{lipshitz} or creative telescoping as proposed in~\cite{ChyzakKauers}. Or we guess a differential equation and verify its correctness similarly as we did before for its minimal polynomial. We will do none of that, but explain how to derive a differential equation directly from its minimal polynomial by unrolling the classical proof that an algebraic function is D-finite. For simplicity we show it for $F_0 \equiv F(0;t)$. Note that $\mathbb{Q}(x,t)[F_0]$ is a finite-dimensional vector space whose dimension equals the degree of the minimal polynomial of $F_0$ and that the extended Euclidean algorithm implies that $\mathbb{Q}(x,t)[F_0]$ is also a field which is closed under taking derivatives. Recall that $P_0(t,Y) = 1-Y+t^2Y^2$ is the minimal polynomial of $F_0$.
By differentiating $P_0(t,F_0) = 0$ with respect to $t$ we find that
\begin{equation}\label{eq:diff0}
\frac{\partial P_0}{\partial t}(t,F_0) + \frac{\partial P_0}{\partial Y}(t,F_0) \frac{\partial F_0}{\partial t} = 0.
\end{equation}
Since $\deg_Y \frac{\partial P_0}{\partial Y} < \deg_Y P_0$, and because $P_0$ is irreducible, there are $u,v\in\mathbb{Q}(t)[Y]$ such that 
\begin{equation*}
u \frac{\partial P_0}{\partial Y} + v P_0 = 1, \quad \text{and hence} \quad u(t,F_0)\frac{\partial P_0}{\partial Y}(t,F_0) = 1.
\end{equation*}
Multiplying equation~\eqref{eq:diff0} by $u(t,F_0)$ and rearranging terms gives
\begin{equation}\label{eq:diff1}
\frac{\partial F_0}{\partial t} = - u(t,F_0) \frac{\partial P_0}{\partial t}(t,F_0). 
\end{equation}
Since $\mathbb{Q}(t)[F_0]$ is a $2$-dimensional vector space there is a non-trivial linear relation between $1$, $F_0$ and $\frac{\partial F_0}{\partial t}$ over $\mathbb{Q}(t)$ that gives rise to a non-trivial linear differential relation for $F_0$:
\begin{equation*}
t(1-4t^2) \frac{\partial F_0}{\partial t} + 2(1-2t^2) F_0 = 2.
\end{equation*}
By differentiating the equation with respect to $t$ we can get rid of the inhomogeneous part:
\begin{equation}\label{eq:diffeq}
t(1-4 t^2) \frac{\partial^2 F_0}{\partial t^2} + (3-16t^2) \frac{\partial F_0}{\partial t} - 8t F_0 = 0.
\end{equation}
Hence, $F_0$ is D-finite as stated before.
From the differential equation one can deduce the following recurrence relation 
\begin{equation}\label{eq:rec1}
(4+n) f(0;n+2) - 4(1+n) f(0;n) = 0
\end{equation}
for the coefficients of $F_0$. It has essentially order $1$, so the coefficient sequence of $F_0$ is essentially hypergeometric~\cite{aEqualsB}. 
In case one does not care whether the recurrence for the coefficients of $F_0$ is linear or not one can as well apply $[t^n]$ to~$F_0 = 1 + t^2 F_0^2$ and find that
\begin{equation}\label{eq:rec2}
f(0;n) = 
\begin{cases} 
      1,\quad  \text{if}\quad  n = 0,\\
      \sum_{k=0}^{n-2} f(0;k) f(0;n-2-k) \quad \text{else}.
   \end{cases}
\end{equation}
The latter recurrence has an evident combinatorial meaning which is not surprising since this is the case for the minimal polynomial of $F_0$ it is derived from. Given an excursion $e$ of length $n\geq 2$ we can write it as the concatenation of excursions $e_1$ and $e_2$ such that $e_1$ visits $0$ at its beginning and end only. If $e_1$ has length $k$ then removing its first and last step induces a bijection between excursions of this kind and excursions of length $k-2$. Since the set of excursions of length $n$ partitions according to the first time the starting point is visited again we have $f(0;n) = \sum_{k=2}^n f(0;k-2) f(0;n-k)$. Together with $f(0;0) = 1$ and $f(0;1)=0$ this gives the recurrence in~\eqref{eq:rec2}. The advantage of the linear recurrence over the non-linear one is the number of operations needed to compute $f(0;n)$. The linear recurrence requires $O(n)$ many arithmetic operations, when used in a naive way, while the other requires $O(n^2)$ many. The computation of $f(0;n)$ for fixed and large $n$ can be done even faster when not caring about the previous terms. We refer the interested reader to~\cite{vonZurGathen,brothers,BostanImpoved}.

The reflection principle implies a simple formula for $g(n) := f(0;2n)$. It can also be deduced from the recurrence relation it satisfies:
\begin{equation*}
g(n) = \prod_{k=0}^{n-1} \frac{4(1+2k)}{4+2k} = \frac{4^n \frac{(2n)!}{2^n n!}}{2^n (n+1)!} = \frac{(2n)!}{n! (n+1)!} = \frac{1}{n+1}\binom{2n}{n}.
\end{equation*}
We point out that there are algorithms for finding closed form solutions of linear recurrences such as polynomial and rational, hypergeometric and d'Alembertian solutions. We refer to~\cite{kauers2023d} for more information on that. The differential equation~\eqref{eq:diffeq} we derived for $F_0$ is not only useful for encoding the generating function, performing exact computations, or deriving linear recurrences for its coefficient sequence. It also allows one to decide whether $F_0$ has some closed form representation and to find it in case there is one. The differential operator $L$ that corresponds to the differential equation~\eqref{eq:diffeq} is
\begin{equation*}
L = t(1-4t^2)\frac{\partial^2}{\partial t^2} + (3-16t^2) \frac{\partial}{\partial t} - 8t.
\end{equation*}
It is the least common left multiple of the operators
\begin{equation*}
L_1 =  t \frac{\partial}{\partial t} + 2 \quad \text{and} \quad L_2 = t(2t-1)(2t+1) \frac{\partial}{\partial t} + 2(2t^2-1),
\end{equation*}
and its solution space is the direct sum of the solution spaces of~$L_1$ and~$L_2$.
The latter are easily determined since~$L_1$ and~$L_2$ both have order $1$. We just note that the solutions of a differential equation of the form $q_0 f' + q_1 f = 0$, with $q_0,q_1\in\mathbb{Q}(t)$, are constant multiples of $\exp(-\int \frac{q_1}{q_0} \mathrm{dt})$. The solution space of $L_1$ is therefore generated by $f_1 = 1/t^2$, and the solution space of $L_2$ is easily seen to be generated by $f_2 = \frac{\sqrt{1-4t^2}}{t^2}$ by performing the partial fraction decomposition
\begin{equation*}
\frac{2(2t^2-1)}{t(2t-1)(2t+1)} = \frac{2}{t} + \frac{1}{1-2t} - \frac{1}{1+2t}
\end{equation*}
and integrating each summand separately. Both, $f_1$ and $f_2$, have a singularity at~$0$, a pole of order $2$. It can be eliminated by linearly combining $f_1$ and $f_2$ to $f := (1-\sqrt{1-4t^2})/t^2$. By comparing $f(0)=2$ 
with $f(0;0)=1$ 
we find that 
\begin{equation*}
F(0;t) = \frac{1-\sqrt{1-4t^2}}{2 t^2}.
\end{equation*}

\section{Asymptotic methods}\label{sec:asympMethod}
Often one is not interested in an exact formula for a combinatorial sequence but in an \emph{asymptotic} one. There can be different reasons for that. It could be that there is no simple exact formula, or that there is a one, yet it is difficult to evaluate. There are different ways an asymptotic formula can be derived. For instance, using Stirling's formula,
\begin{equation*}
n! = \left( \frac{n}{e} \right)^n \sqrt{2 \pi n} \left( 1 + O\left(\frac{1}{n} \right)\right), \quad n \rightarrow \infty,
\end{equation*}
one can find for 
\begin{equation*}
g(n) = \frac{1}{n+1} \binom{2n}{n}
\end{equation*}
the following asymptotic approximation
\begin{equation}\label{eq:asymptotics}
g(n) \sim \frac{1}{\sqrt{\pi}} 4^n n ^{-\frac{3}{2}}, \quad n \rightarrow \infty.
\end{equation}
This asymptotic formula consists of several parts: $4^n$ and $n^{-\frac{3}{2}}$ are called the \emph{exponential} and \emph{polynomial growth}, respectively, and $1/\sqrt{\pi}$ is referred to as the \emph{growth constant.} 
\bigskip

We next explain how to derive information about the asymptotics of the sequence $(g(n))$ from the linear recurrence it satisfies. It is based on the notion of generalized series solutions.

\begin{Definition}
A \emph{generalized series} is a $\mathbb{C}$-linear combination of formal objects of the form
\begin{equation}\label{eq:genSeries}
\begin{aligned}
\Gamma(x)^{u/v} &\phi^x \exp(s_1 x^{1/v} + s_2 x^{2/v} + \dots + s_{v-1}x^{(v-1)/v})\\
 \times &x^\alpha ( (c_{0,0} + c_{0,1}x^{-1/v} + c_{0,2} x^{-2/v} + \dots)\\
& \quad + (c_{1,0} + c_{1,1}x^{-1/v} + c_{1,2}x^{-2/v} + \dots ) \log(x)\\
& \quad + \dots\\
& \quad + (c_{m,0} + c_{m,1}x^{-1/v} + c_{m,2}x^{-2/v} + \dots  )\log(x)^m),
\end{aligned}
\end{equation}
where $u \in \mathbb{Z}$, $v\in\mathbb{N}\setminus\{0\}$ and $m\in\mathbb{N}$, and $\phi,\ \alpha,\ s_1,\dots, s_{v-1},\ c_{i,j}\in \mathbb{C}$, and its parts behave as follows with respect to the shift operation:
\begin{equation*}
\Gamma(x+i) = x^{\overline{i}} \Gamma(x), \qquad \phi^{x+i} = \phi^i \phi^x, \qquad 
(x+i)^\alpha = \sum_{n=0}^\infty \binom{\alpha}{n}i^\alpha x^{\alpha - n},
\end{equation*}
\begin{equation*}
\exp(s_l(x+i)^{l/v}) 
= \exp(s_l x^{l/v}) \sum_{n=0}^\infty \frac{1}{n!} \left( s_l \sum_{k = 1}^\infty \binom{l/v}{k}i^kx^{l/v-k}\right)^n
\end{equation*}
and 
\begin{equation*}
\log(x+i) = 
\log(x) - \sum_{n=1}^\infty \frac{(-i)^n}{n}x^{-n}.
\end{equation*}
\end{Definition}
A generalized series which solves a given linear recurrence is called a generalized series solution. It can be shown that if the recurrence has order $r$, then there are exactly as many linearly independent generalized series solutions, all of which are of the above form. Their terms can be computed term by term, see~\cite{manuelAsymp} for an implementation of an algorithm in Mathematica. The recurrence for $(g(n))$,
\begin{equation*}
(4+2n)g(n+1) - 4(1+2n)g(n) = 0,
\end{equation*}
has, up to multiples, only one generalized series solution. Its first terms are
\begin{equation*}
4^x x^{-3/2} \left(1 -\frac{9}{8x} + \frac{145}{128 x^2}  -\frac{1155}{1024 x^3} + \frac{36939}{32768 x^4} + \mathrm{O}(x^{-5}) \right).
\end{equation*}
Although a priori only correct in an algebraic sense, any sequence that solves a linear recurrence has an asymptotic expansion which is a linear combination of generalized series solutions~\cite{WimpBerger, Birkhoff, Birkhoff1}. Hence, there is a $c\in \mathbb{C}$ such that 
\begin{equation*}
g(n) = c 4^n n^{-3/2} \left(1 -\frac{9}{8n} + \frac{145}{128 n^2}  -\frac{1155}{1024 n^3} + \frac{36939}{32768 n^4} + \mathrm{O}(n^{-5}) \right), \quad n \rightarrow \infty.
\end{equation*}
The exponential and polynomial growth of $g(n)$ is therefore $4^n$ and $n^{-3/2}$, respectively. In general, it is not known how to determine the coefficients of the linear combination. However, numerical approximations can be found by comparing $g(n)$ with
\begin{equation*}
4^n n^{-3/2} \left(1 -\frac{9}{8n} + \frac{145}{128 n^2}  -\frac{1155}{1024 n^3} + \frac{36939}{32768 n^4} \right)
\end{equation*}
for different and large values of $n$~\cite{kauers2011mathematica}. Based thereon it is often possible to guess their exact values~\cite{kauers2018guess}. 

%

\section{Some references}\label{sec:references}

The methods which have been presented here are not new. They have appeared in the work of many other people. In the following section we explain where they can be found.
\bigskip

The method presented in Section~\ref{sec:kernelMethod} dates back at least to~\cite{Knuth}, where it is presented as the solution to Exercise~2.2.1.-4. Having found considerable attention since then~\cite{bousquet2006polynomial,buchacher2018inhomogeneous,notarantonio2023systems,bostan2022algorithms, bostan2023fast}, it is meanwhile referred to as the classical kernel method. 


The approach of Section~\ref{sec:wienerHopf} which is based on a particular factorization of the kernel polynomial 
has not appeared before in this formal / algebraic setting. However, the usefulness of such factorizations is known in the context of Riemann-Hilbert boundary problems~\cite[Section~2.2.3]{Fayolle},~\cite{gohberg2003overview}. 

The orbit-sum method presented in~Section~\ref{sec:orbitSum} was introduced in~\cite{smallSteps}. Its generalization was initiated in~\cite{bostan2021counting} and then continued in~\cite{buchacher2022orbit} based on the Newton-Puiseux algorithm~\cite{buchacher2025newton}.

It is well-known that compositional inverses and Lagrange inversion relate to the enumeration of lattice walks~\cite{Gessel}. Yet, to the best of our knowledge, it has not been applied before as shown in Section~\ref{sec:compInv}.

Inspired by Tutte's work on the enumeration of maps~\cite{tutte1995chromatic}, the invariant method sketched in Section~\ref{sec:invMethod} was introduced and applied to the enumeration of lattice walks in~\cite{bernardi2020counting,bousquet2023enumeration, bousquet2025stationary}. Related algorithmic questions have received considerable attention in~\cite{buchacher2020separating, buchacher2024separating, buchacher2024separated, buchacher2025finite} and~\cite{bonnet2024galoisian}.

The connection of lattice walks with continued fractions and orthogonal polynomials sketched in Section~\ref{sec:contFrac} is classical, and so are the reflection principle and the cycle lemma illustrated in~\ref{sec:reflecPrinciple} and~\ref{sec:cycleLemma}. We refer to~\cite{flajolet1980},~\cite[Sec~10.9 - 10.11]{Krattenthaler} and~\cite{randomWalksInAWeylChamber},~\cite[Sec~10.3, 10.18]{Krattenthaler} and~\cite{spitzer1956combinatorial},~\cite[Sec~10.4]{Krattenthaler}, respectively, and the references therein for further information.


The combinatorial factorization presented in Section~\ref{sec:combFact} is taken and adapted from~\cite{Gessel}.

The paradigm of guess and prove illustrated in Section~\ref{sec:guessProve} has ever been popular in science~\cite{polya1957solve}. However, only since the advent of high performing computers it has given rise to powerful effective methods. The availability of nowadays hard- and software makes the class of D-finite functions introduced in Section~\ref{sec:diffAlg} accessible to practical computations. We refer to~\cite{kauers2023d} for extensive information on that.  
\bigskip

There are methods we have not discussed here though they do fall within the framework of this text. We have not presented them because they cannot be applied to equation~\eqref{eq:kEq}. We refer to~\cite{smallSteps, beaton2018exact} and~\cite{bousquet2016elementary} and~\cite{Mishna,Mishna1}
for the half-orbit-sum method, the obstinate kernel method and the iterated kernel method, respectively.

\section{Acknowledgements}

Thanks go to the Johannes Kepler University Linz and the state of Upper Austria which supported parts of this work with the grant LIT-2022-11-YOU-214. Thanks also go to Christian Krattenthaler for his comments on an earlier version of the manuscript.

\bibliographystyle{plain}
\bibliography{survey}

\end{document}